\documentclass[12pt,reqno]{amsart}
\usepackage[margin=1in]{geometry}
\usepackage{amscd,amsfonts,amsmath,amssymb,amsthm,latexsym,mathrsfs,textcomp,verbatim}
\setcounter{tocdepth}{1}
\usepackage{accents}
\usepackage{bm}
\usepackage{bbm}
\usepackage{booktabs}
\usepackage{cite}
\usepackage{color}
\usepackage{constants}
\usepackage{csquotes}
\usepackage{enumerate}
\usepackage{etoolbox}
\usepackage{float}
\usepackage{hypbmsec}
\usepackage[dvipsnames]{xcolor}
\usepackage[breaklinks=true,colorlinks=true,linkcolor=black!40!blue,citecolor=black!40!blue,filecolor=black!40!blue ,urlcolor=black!40!blue, pagebackref]{hyperref}
\hypersetup{linktocpage}

\usepackage[capitalise]{cleveref}
\usepackage{mathtools}
\usepackage{soul}
\usepackage{tikz}
\usepackage{tikz-cd}
\usetikzlibrary{shapes.geometric}
\usetikzlibrary{shapes.misc}
\usetikzlibrary{positioning}
\allowdisplaybreaks[4]
\usepackage{hypbmsec}
\swapnumbers
\theoremstyle{plain}
\newtheorem{theorem}[subsubsection]{Theorem}
\newtheorem{corollary}[subsubsection]{Corollary}
\newtheorem{proposition}[subsubsection]{Proposition}
\newtheorem{lemma}[subsubsection]{Lemma}
\theoremstyle{definition}
\newtheorem{definition}[subsubsection]{Definition}

\theoremstyle{remark}
\newtheorem{remark}[subsubsection]{Remark}

\numberwithin{equation}{subsection}

\newcommand{\Z}{\mathbb{Z}}
\renewcommand{\C}{\mathbb{C}}
\newcommand{\R}{\mathbb{R}}
\newcommand{\Q}{\mathbb{Q}}

\renewcommand{\leq}{\leqslant}
\renewcommand{\geq}{\geqslant}

\newcommand{\D}{\mathbb{D}}

\newcommand{\PGL}{\mathrm{PGL}_2}

\newcommand{\BA}{\mathrm{BA}}

\newcommand{\Res}{\mathrm{Res}}

\newcommand{\Spec}{\mathrm{Spec}}

\newcommand{\A}{\mathbb{A}}

\renewcommand{\tilde}{\widetilde}

\renewcommand{\bar}{\overline}

\newcommand{\Hp}{\mathbb{H}}

\renewcommand{\Im}{\mathrm{Im}}

\renewcommand{\pmod}[1]{\, (\mathrm{mod} {\, #1})}

\renewcommand{\Pr}{\mathbb{P}}

\renewcommand{\Re}{\mathrm{Re}}

\newcommand{\tors}{\mathrm{tors}}

\newcommand{\cV}{\mathcal{V}}

\newcommand{\Isog}{\mathrm{Isog}}
\patchcmd{\section}{\scshape}{\bfseries}{}{}
\makeatletter
\renewcommand{\@secnumfont}{\bfseries}
\makeatother

\makeatletter\newcommand{\tpmod}[1]{{\@displayfalse\pmod{#1}}}
\makeatletter
\@namedef{subjclassname@2020}{\textup{2020} Mathematics Subject Classification}
\makeatother

\usepackage{color}

\begin{document}

\title[Real GT for uniformization maps of algebraic Riemann surfaces]{Real geometric transcendence for uniformization maps of algebraic Riemann surfaces}


\author{Arshay Sheth}
\address[Arshay Sheth]{School of Mathematics, Tata Institute of Fundamental Research, Homi Bhabha Road, Colaba, 
Mumbai - 400005, India}

\email{asheth@math.tifr.res.in}
\urladdr{\href{https://sites.google.com/view/arshaysheth/home}{https://sites.google.com/view/arshaysheth/home}}

\author{Matteo Tamiozzo}
\address[Matteo Tamiozzo]{ Universit\'e Sorbonne Paris Nord, Institut Galil\'ee, LAGA, Villetaneuse, 93430, France}
\email{tamiozzo@math.univ-paris13.fr}

\thanks{The first author was supported by funding from the European Research Council under the European Union’s Horizon 2020 research and innovation programme (Grant agreement No. 101001051 — Shimura varieties and the Birch--Swinnerton-Dyer conjecture) during preparation of parts of the article.}

\keywords{}

\begin{abstract}
Let $X$ be a smooth connected complex algebraic curve that is not simply connected, and let $\tilde{X}$ be the universal cover of $X$. We study the set of irreducible real algebraic curves in $X$ (seen as a real algebraic surface) containing the image of an arc of a real algebraic curve in $\tilde{X}$. In particular, we give necessary and sufficient conditions on $X$ in order for this set to be non-empty or infinite, and we describe the set explicitly when $X$ is projective of genus one, or $X$ is hyperbolic and its fundamental group is arithmetic.
\end{abstract}

\maketitle
\tableofcontents

\section{Introduction}

\subsection{Motivation, aims and structure of the article}
Let $\exp: \C \rightarrow \C^\times$ be the function sending $z$ to $e^{2 \pi i z}$. By the Kronecker--Weber theorem, every finite abelian extension of $\Q$ is contained in a cyclotomic extension $\Q(\exp(a))$ for some $a \in \Q$. Note that rational numbers are the only algebraic numbers $a$ such that $\exp(a)$ is algebraic, by the Gelfond--Schneider theorem \cite[Corollary 9.3]{rr14}.

Finite abelian extensions of imaginary quadratic fields are contained in those generated by the $j$-invariant of suitable elliptic curves $\C/\Lambda$ with complex multiplication, and by values of the associated Weber function - a multiple of a power of the Weierstrass function $\wp_\Lambda$ \cite[p. 19]{la87} - at preimages of torsion points of $\C/\Lambda$ \cite[Chapter 10]{la87}. Once again, these numbers are characterized by suitable ``bialgebraicity'' properties: on the one hand, algebraic numbers $\tau$ in the upper half-plane $\Hp \subset \C$ such that $j(\tau)$ is algebraic belong to imaginary quadratic fields \cite[Theorem 15.3]{rr14}. On the other hand, let $\Lambda_{\bar{\Q}}\subset \C$ be the $\bar{\Q}$-vector space generated by $\Lambda$. If the invariants $g_2$ and $g_3$ of $\Lambda$ are algebraic then, by \cite[Theorem V]{mas74}, the only numbers $a \in \Lambda_{\bar{\Q}}$ such that $\wp_\Lambda(a)$ is algebraic are preimages of torsion points of $\C/\Lambda$.

\subsubsection{Complex bialgebraic geometry} Geometric counterparts of the above transcendence results have been studied. For instance, the work of Ax \cite{ax71} (see also \cite[Example 4.5]{kuy18}) implies that, if $\hat{Z}\subset \C^n$ is an irreducible algebraic variety that is not contained in a translate of a hyperplane defined over $\Q$, then the functions
\begin{align*}
\hat{Z} & \rightarrow \C \\
(x_1, \ldots, x_n) & \mapsto \exp(x_i), \; \; \; \; 1 \leq i \leq n,
\end{align*}
are algebraically independent over $\C$. Similar functional transcendence results hold true for the $j$-invariant and for Weierstrass elliptic functions, cf. \cite[pp. 1787-1789]{Pil11}, and yield a description of ``bialgebraic subvarieties'' of $\Hp^n$ (uniformizing a product of modular curves) and $\C^n$ (uniformizing a product of elliptic curves) respectively; cf. \cite[\S 4.1]{kuy18}. 
 
As explained in \cite{Pil11, kuy18} and \cite[pp. 64, 65]{zan12}, such a description of bialgebraic varieties plays an important role in the proof of the André--Oort conjecture and analogs thereof.

\subsubsection{Real bialgebraic geometry for algebraic Riemann surfaces}

After identifying $\C$ with $\R^2$, one can investigate versions of the previous results in real algebraic geometry. If $n=1$ - which, in the real setting, is the first interesting case - the relevant results were proved in \cite[Theorem 2.2.4, Theorem 3.4.2]{Tam23} for the exponential function and the $j$-invariant. In the latter case, real bialgebraic curves in $\Hp$ arise from tori in $\mathrm{GL}_{2, \Q}$ that do not give rise to Shimura varieties.

The first goal of this article is to extend the above results to Weierstrass elliptic functions $\wp_\Lambda$ attached to lattices $\Lambda \subset \C$; this is achieved in \S \ref{sec:bialgweier}, cf. Theorem \ref{mainthmwp}. This result can be restated in terms of the uniformization map $p: \R^2 \simeq \C \rightarrow \C/\Lambda$ - seeing the target as a real algebraic surface. Thus, we obtain a description of real (irreducible) plane curves $\hat{C} \subset \A^2_\R$ such that $\hat{C}(\R)$ is infinite and $p(\hat{C}(\R))$ is contained in an algebraic curve $C$ in $\C/\Lambda$, cf. Theorem \ref{mainthmell}.

As an example, let $\hat{C} \subset \A^2_\R$ be an irreducible curve with infinitely many real points. If $\Lambda=\Z[i]$, then the set $p(\hat{C}(\R))$ is contained in a real algebraic curve in $\C/\Z[i]$ if and only if $\hat{C}$ is a line with rational slope. On the other hand, for a very general elliptic curve $\C/\Lambda$, the image $p(\hat{C}(\R))$ is Zariski dense in $\C/\Lambda$ for every $\hat{C}$ as above, cf. \S \ref{subs:modcrit}. For instance, this is the case if $\Lambda=\Z+\Z\tau$ for $\tau=t+i$, where $t \in \R$ is any transcendental number.

The functional transcendence phenomena observed for the uniformization map of projective curves of genus one extend to hyperbolic algebraic curves; this is the subject of \S \ref{sect:hypcurves}. 
All in all, the results in this article, together with those in \cite[\S 2]{Tam23}, allow to obtain uniform statements describing the set of real bialgebraic curves  in an arbitrary (non simply connected) algebraic Riemann surface, as we will explain in \S \ref{ssec:overview} below.

\begin{remark}
In \S \ref{sec-rrayn} we observe that Raynaud's theorem on torsion points in subvarieties of complex abelian varieties descends to an analogous result on subvarieties of real abelian varieties. For the Weil restriction $Y=\mathrm{Res}_{\C/\R}X$ of a complex elliptic curve, we deduce in Theorem \ref{thm:mmrealell} a relation between curves in $Y$ containing infinitely many torsion points and bialgebraic curves in $Y(\R)$, analogous to the one that holds when the ambient space is a complex abelian variety. This is a further example - in addition to those studied in \cite{Tam23}, \cite{Tam26} - suggesting that several statements related to the André--Oort conjecture and variants or generalizations thereof (see \cite{kuy18}, \cite{pil22}) could have analogs in real algebraic geometry.
\end{remark}

\subsection{Overview of the main results}\label{ssec:overview}
Let $X$ be a smooth connected complex algebraic curve such that the Riemann surface $X(\C)$ is not simply connected. Let $p \colon \tilde{X} \rightarrow X(\C)$ be the universal cover of $X(\C)$. We regard $X(\C)$ as the set of real points of the real surface $Y=\Res_{\C/\R}X$. By the uniformization theorem for Riemann surfaces (whose wonderful historical development is summarized in the introduction of \cite{ger10}) the universal cover $\tilde{X}$ is isomorphic either to $\C$ - which we identify with $\R^2$ - or to the upper half-plane $\Hp$. In both cases, we have a notion of real algebraic curve in $\tilde{X}$, and of arc inside such a curve (cf. Definition \ref{def:balgell}(2) and Definition \ref{def-bialgh}(1)). We call a subset $\mathcal{C} \subset Y(\R)$ a \textit{bialgebraic curve} if there is an irreducible curve $C \subset Y$ such that $\mathcal{C}=C(\R)$, and there exists an arc $\mathcal{I}$ in an algebraic curve $\hat{\mathcal{C}} \subset \tilde{X}$ such that $p(\mathcal{I})$ is contained in $\mathcal{C}$.

Every biholomorphism of $X$ lifts to a biholomorphism of $\tilde{X}$, which preserves real algebraic curves in $\tilde{X}$; this implies that bialgebraic curves in $Y(\R)$ do not depend on the chosen covering map $p$, and that the image of a bialgebraic curve $\mathcal{C} \subset Y(\R)$ via an automorphism of $X$ is bialgebraic (see Remark \ref{rem-alginH}(2) if $\tilde{X}\simeq \Hp$; if $\tilde{X}\simeq \C$ the argument is similar). We say that two bialgebraic curves $\mathcal{C}_1$ and $\mathcal{C}_2$ are equivalent if there is an automorphism $a: X \rightarrow X$ such that $a(\mathcal{C}_1)=\mathcal{C}_2$.

\begin{remark}
 Let $\hat{\mathcal{C}}$ be an algebraic curve in $\tilde{X}$, and let $q: X \rightarrow U$ be a finite morphism with values in $\Pr^1_\C$ or an open subset thereof (if $X(\C)$ is not compact). There is an arc $\mathcal{I}\subset \hat{\mathcal{C}}$ such that $p(\mathcal{I})$ is contained in an irreducible curve in $Y$ if and only if there is an arc $\mathcal{I} \subset \hat{\mathcal{C}}$ such that $q \circ p(\mathcal{I})$ is contained in an irreducible curve in $\Res_{\C/\R}U$. Therefore, when $\tilde{X}\simeq \Hp$ the study of bialgebraic curves in $Y(\R)$ can be thought of, in more classical terms, as the study of the functional transcendence properties of Poincaré's fuchsian functions, seen as functions of two real variables. In fact, geodesics in the open unit disc whose image via suitable fuchsian functions is contained in the real axis already appear in Poincaré's work on uniformization of the complex projective line minus a finite set of real points \cite[p. 1199]{poin81} (see \cite[\S 2.3]{beg12} for a modern account).
\end{remark}

\begin{theorem}\label{thm:altogether} Let $X$ be a smooth connected complex algebraic curve such that $X(\C)$ is not simply connected, and let $Y=\Res_{\C/\R}X$. The following assertions hold true.
\begin{enumerate}
    \item There exist bialgebraic curves in $Y(\R)$ if and only if there is a smooth connected complex curve $X'$ defined over the reals with non-empty set of real points, and a finite étale map $X' \rightarrow X$.
    \item The set of equivalence classes of bialgebraic curves in $Y(\R)$ is infinite if and only if $X$ is an elliptic curve with complex multiplication, or a curve with universal cover $\Hp$ whose fundamental group is an arithmetic lattice in $\PGL(\R)^+$.
\end{enumerate}
\end{theorem}

\begin{remark}\leavevmode
\begin{enumerate}
    \item The condition we impose in the definition of bialgebraic curve is essentially the minimal one making the arguments in this article go through; we refer the reader to Remark \ref{rem:defbalg} for some motivation behind the definition, and to \cite[pp. 64, 113]{zan12} for two situations in which a very similar notion naturally appears.
    
    It turns out that, in our situation, a bialgebraic curve in $Y(\R)$ contains the full image of an algebraic curve in $\tilde{X}$: see Remark \ref{rem:defbalg}(2) and \S \ref{ssec: bialisgeo} for $X$ not isomorphic to $\A^1 \smallsetminus \{0\}$; in the remaining case, the claim follows from the arguments in the proof of \cite[Theorem 2.2.4]{Tam23}.
    \item Theorem \ref{thm:altogether} follows from the main results in this article (especially Theorem \ref{mainthmell} and Theorem \ref{thm-main}) and \cite[Theorem 2.2.4]{Tam23}; the details of the proof will be given in \S \ref{ssec:altogether}.
    \item Other finiteness statements for hyperbolic curves with non-arithmetic fundamental group, that inspired part of the above theorem, are mentioned in Remark \ref{rem:finnonar}(2).
\end{enumerate}
\end{remark}

\subsubsection{Explicit descriptions of bialgebraic curves}

Along the way, we obtain several descriptions of bialgebraic curves in $Y(\R)$. 
For arbitrary $X$, all bialgebraic curves $\mathcal{C}=C(\R)$ are obtained via the following construction: take a finite étale cover $X' \rightarrow X$ with $X'=C' \times_\R \C$ as in Theorem \ref{thm:altogether}, inducing a map $q: Y'=\Res_{\C/\R}X' \rightarrow Y$, and let $C=q(C')$. The fact that this construction accounts for all bialgebraic curves in $Y(\R)$ when $X$ is compact of genus one follows from Remark \ref{rem-bialgrealell}, and for $X$ hyperbolic it is the content of Proposition \ref{prop-bialfromrealcover}. Finally, for $X=\A^1_\C \smallsetminus \{0\}$ it can be checked directly using the argument in the proof of \cite[Theorem 2.2.4]{Tam23}. 

Roughly speaking, we see that the functional transcendence properties of the uniformization map $\tilde{X} \rightarrow X$ ``detect'' real finite étale covers of $X$.

For special types of curves, we can be more explicit.
\begin{enumerate}
    \item For an elliptic curve $X$, we describe explicitly the curves in $\R^2$ whose image in $Y(\R)$ is contained in an algebraic curve in Theorem \ref{mainthmwp}, and bialgebraic curves in $Y(\R)$ in \S \ref{ssec:gpbialgell}; the latter are the real points of translates of elliptic curves in $Y$.
    \item Suppose that $X(\C)$ is compact hyperbolic with arithmetic fundamental group, commensurable with the group of norm one units of an order in a quaternion algebra $B$ over a totally real field $F$. In this case, all bialgebraic curves arise from non-CM quadratic extensions of $F$ embedded in $B$, cf. \S \ref{ssec:bialgarit}. This generalizes the description of bialgebraic curves in the modular surface $\mathrm{SL}_2(\Z)\backslash \Hp$ given in \cite[\S 3]{Tam23}.
    \item When $X(\C)$ is compact, bialgebraic curves in $Y(\R)$ are precisely Zariski closures of closed geodesics if and only if $X$ is either an elliptic curve with complex multiplication (cf. Theorem \ref{mainthmell}), or a hyperbolic curve with arithmetic fundamental group (cf. Corollary \ref{cor:cocogeod}).
\end{enumerate}

\subsubsection{Proof strategy}
The basic idea to study bialgebraic curves in $Y(\R)$, as in \cite{Tam23}, is to reduce via a base change argument to the study of bialgebraic curves in a product of two complex curves. The relevant complex inputs in this article are Theorem \ref{thm:pila} and Proposition \ref{mainprop}; we also use Margulis' theorem \cite[Theorem 1']{mar91} to deduce Theorem \ref{thm-klin} from Proposition \ref{mainprop}, hence obtain finiteness of bialgebraic curves in $Y(\R)$ when $X$ is hyperbolic and has non-arithmetic fundamental group. 
In \cite{Tam23}, the real and complex situations were related via direct computations, relying on explicit formulas involving the exponential function, the $j$-invariant or modular polynomials. We use similar explicit methods in \S \ref{sec:bialgweier} but, in the rest of the article - especially in \S \ref{sect:hypcurves} - most computations are replaced by more conceptual arguments. Besides allowing to deal with arbitrary complex algebraic curves, such arguments could be more amenable to generalization to the study of real bialgebraic subsets of higher dimensional locally symmetric spaces.

\subsection{Notation and conventions}
The following notation and conventions will be used throughout the article.
\begin{enumerate}[-]
    \item For an abelian group $G$, we denote by $G_{\tors}$ the subgroup of torsion elements.
    \item Let $V$ be a complex vector space, and let $R\subset \C$ be a subring. For $v_1, v_2 \in V$ we denote by $\langle v_1, v_2 \rangle_R$ the $R$-submodule of $V$ generated by $v_1$ and $v_2$.
    \item Let $\Lambda_1, \Lambda_2$ be two lattices in $\C$. We denote by $\Isog(\Lambda_1, \Lambda_2)$ the set of $\alpha \in \C$ such that $\alpha\Lambda_1 \subset \Lambda_2$; it is an additive subgroup of $\C$, free of rank at most two. We say that $\Lambda_1$ is isogenous to $\Lambda_2$ if $\Isog(\Lambda_1, \Lambda_2)$ is non-zero (equivalently, the complex elliptic curves $\C/\Lambda_1$ and $\C/\Lambda_2$ are isogenous). A CM lattice is a lattice $\Lambda$ such that $\Isog(\Lambda, \Lambda)$ has rank two.

    The image of a lattice $\Lambda \subset \C$ via complex conjugation is denoted by $\overline{\Lambda}$.
    \item By a subvariety of an algebraic variety we mean a closed (possibly reducible) reduced subscheme. We often tacitly identify complex algebraic varieties with their complex points. We identify $\R^2$ with $\C$ sending $(x, y)$ to $x+iy$.
\end{enumerate}

\subsection*{Acknowledgements} We are grateful to Lambert A'Campo, Bruno Klingler, Martí Roset Julià and Olivier Wittenberg for useful exchanges and discussions. In particular, we thank Klingler for explaining the strategy of proof of Theorem \ref{thm-klin}, which led to much of the content of \S \ref{sect:hypcurves}. We thank A'Campo for suggesting the statement of Corollary \ref{cor:cocogeod}, and Roset Julià for pointing out the reference \cite{gmx21}. We thank Joseph Harrison, Nicola Ottolini and Harry Schmidt for their interest in this work and for helpful discussions.

\section{Real bialgebraic curves for Weierstrass elliptic functions}\label{sec:bialgweier}

Let $\Lambda\subset \C$ be a lattice. In this section we describe real plane algebraic curves whose image via the Weierstrass $\wp$-function, seen as a function from $\R^2\smallsetminus \Lambda$ to $\R^2$, is contained in a real algebraic curve.

\subsection{Statement of the main theorem} Fix a lattice $\Lambda \subset \C\simeq \R^2$ with attached Weierstrass $\wp$-function $\wp_\Lambda: \C \rightarrow \Pr^1(\C)$. Consider the function
\begin{align*}
    \mathcal{P}_{\Lambda}: \R^2 \smallsetminus \Lambda & \rightarrow \R^2\\
    (x, y) & \mapsto (\Re (\wp_\Lambda(x+iy)), \Im (\wp_\Lambda(x+iy))).
\end{align*}

\begin{definition}\label{def-bialg}
A non-empty subset $\cV \subsetneq \R^2$ is called weakly bialgebraic for $\mathcal{P}_{\Lambda}$ if it satisfies the following properties:
\begin{enumerate}
\item there exist algebraic subvarieties $V \subset \A^2_\R$ and $W \subsetneq \A^2_\R$ such that $\cV=V(\R)$ and $\mathcal{P}_\Lambda(\cV \cap (\R^2 \smallsetminus \Lambda)) \subset W(\R)$;
\item the set $\cV$ cannot be written in the form $V_1(\R) \cup V_2(\R)$, where $V_1, V_2 \subset \A^2_\R$ are algebraic subvarieties and the inclusions $V_i(\R)\subset \cV$ are proper for $i=1, 2$. 
\end{enumerate}
\end{definition}

\begin{remark} \leavevmode
\begin{enumerate}
    \item The above definition mirrors \cite[Definition 2.2.2, Definition 3.1.2]{Tam23}. The requirement that $\cV$ is non-empty was mistakenly not included in \emph{loc. cit.}; it is needed in order for the main results stated in \cite{Tam23} to be true.
    \vspace{1mm}
    \item  In what follows, we will mostly work with a fixed lattice $\Lambda$; furthermore, we will never consider the stronger notion of ``strongly bialgebraic'' sets \cite[Definition 2.2.2]{Tam23}. Therefore, weakly bialgebraic sets for $\mathcal{P}_\Lambda$ will often be called just ``bialgebraic sets'', and $\mathcal{P}_{\Lambda}$ will often be denoted by $\mathcal{P}$.
\end{enumerate}
\end{remark}

The main result of this section is the following.

\begin{theorem} \label{mainthmwp}
Let $\Lambda$ be a lattice in $\C$. The following assertions hold true.

\begin{enumerate}

\item If $\Lambda$ is not isogenous to $\overline{\Lambda}$, then the only weakly bialgebraic sets for $\mathcal{P}_\Lambda$ are singletons. 

\item Suppose that $\Lambda$ is not a CM lattice and that $\Lambda$ is isogenous to $\overline{\Lambda}$. Let $\gamma \in \C$ be such that $\Isog(\Lambda, \overline{\Lambda})=\mathbb Z \cdot \gamma$. 
\begin{enumerate}
    \item If $|\gamma| \not \in \Q$, then the only weakly bialgebraic sets for $\mathcal{P}_\Lambda$ are singletons.
    \item If $|\gamma| \in \Q$, then weakly bialgebraic sets for $\mathcal{P}_\Lambda$ are singletons and translates of the lines
    \begin{equation*}
        \mathcal{L}_1=\{(x, y) \in \R^2 \mid \sqrt{\gamma}(x+iy) \in \R\} \text{ and } \mathcal{L}_2=\{(x, y) \in \R^2 \mid \sqrt{\gamma}(x+iy) \in i\R\}.
    \end{equation*}
\end{enumerate}

\item If $\Lambda$ is a CM lattice, then weakly bialgebraic sets for $\mathcal{P}_\Lambda$ are singletons and translates of the lines through the origin in $\R^2$ containing a non-zero point of $\Lambda$.
\end{enumerate}
\end{theorem}

\subsection{An Ax--Lindemann--Weierstrass type theorem} Let $\Lambda$ be a lattice in $\C$. Our proof of the above theorem rests on a functional transcendence result for the map
\begin{equation*}
    \wp_{\Lambda \times \bar{\Lambda}}=\wp_{\Lambda}\times \wp_{\overline{\Lambda}}: \C^2 \rightarrow \Pr^1(\C)\times \Pr^1(\C).
\end{equation*}

\begin{definition}
An irreducible algebraic curve $\hat{Z}\subset \C^2$ is bialgebraic for $\wp_{\Lambda \times \bar{\Lambda}}$ if $\wp_{\Lambda \times \bar{\Lambda}}(\hat{Z})$ is not Zariski dense in $\Pr^1(\C)\times \Pr^1(\C)$.
\end{definition}

\begin{theorem}[Brownawell--Kubota\cite{brodaku77}, Pila \cite{Pil11}]\label{thm:pila}
If $\hat{Z} \subset \C^2$ is a bialgebraic curve for $\wp_{\Lambda \times \bar{\Lambda}}$, then $\hat{Z} = L+\sigma$ for some $\sigma \in \C^2$ and some one-dimensional $\C$-vector subspace $L$ of $\C^2$ such that $L \cap (\Lambda \times \bar{\Lambda})$ generates $L$ as an $\R$-vector space. Conversely, every $\hat{Z} = L+\sigma$ as above is bialgebraic for $\wp_{\Lambda \times \bar{\Lambda}}$.
\end{theorem}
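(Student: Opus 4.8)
The plan is to transport the problem to the abelian surface $A=\C^2/(\Lambda\times\bar{\Lambda})$ and to invoke the Ax--Lindemann--Weierstrass theorem, which is the functional transcendence input supplied by \cite{brodaku77, Pil11}. Write $\pi\colon \C^2\to A$ for the uniformization and let $f\colon A\to \Pr^1(\C)\times\Pr^1(\C)$ be the morphism $(P,Q)\mapsto(\wp_\Lambda(P),\wp_{\bar{\Lambda}}(Q))$, so that $\wp_{\Lambda\times\bar{\Lambda}}=f\circ\pi$. Since each factor $E_\Lambda\to\Pr^1(\C)$ has degree two, $f$ is finite and surjective, hence closed and dimension-preserving; therefore a subset of $A$ is Zariski dense in $A$ if and only if its image is Zariski dense in $\Pr^1(\C)\times\Pr^1(\C)$. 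Thus $C$ is bialgebraic precisely when $D:=\overline{\pi(C)}$ (Zariski closure) is a proper subvariety of $A$. As $\pi$ is a local biholomorphism and $C$ is a curve, $\pi(C)$ is one-dimensional, so $D$ is at least one-dimensional; being a proper subvariety of the surface $A$, it is an algebraic curve.

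First I would run the forward direction. The curve $C$ is an irreducible algebraic subvariety of the analytic set $\pi^{-1}(D)$. By the Ax--Lindemann--Weierstrass theorem for $A$, the maximal algebraic subvarieties of $\pi^{-1}(D)$ are cosets $W+\sigma$ of complex-linear subspaces $W\subset\C^2$. Let $W+\sigma$ be such a maximal coset containing $C$. Its dimension is at least $\dim C=1$, and it cannot be $2$, for then $\C^2=W+\sigma\subset\pi^{-1}(D)$ would force $D=A$. Hence $\dim_\C W=1$ and, both being irreducible and one-dimensional, $C=W+\sigma$.

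It remains to identify the admissible lines $W$. The image $\pi(W)$ is a connected subgroup of $A$ whose Zariski closure $B$ is a connected one-dimensional algebraic subgroup of the abelian variety $A$, hence an elliptic curve. Its preimage $\pi^{-1}(B)$ is a union of translates of the complex line $\mathrm{Lie}(B)\subset\C^2$ by lattice points; as $W$ is a subspace through the origin contained in $\pi^{-1}(B)$, it lies in the component $\mathrm{Lie}(B)$ through $0$, and by equality of dimensions $W=\mathrm{Lie}(B)$. Consequently $W\cap(\Lambda\times\bar{\Lambda})=\Lambda_A\cap\mathrm{Lie}(B)$ is the rank-two lattice of $B$, i.e.\ it generates $W$ as an $\R$-vector space. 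Conversely, given such a $W$, the group $W\cap(\Lambda\times\bar{\Lambda})$ is a discrete subgroup spanning $W$ over $\R$, hence a rank-two lattice, so $B:=W/\big(W\cap(\Lambda\times\bar{\Lambda})\big)$ is a compact complex subtorus of $A$, that is an elliptic curve; then $\pi(W+\sigma)=B+\pi(\sigma)$ is an algebraic curve, not Zariski dense in $A$, and $C=W+\sigma$ is bialgebraic by the finiteness of $f$.

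The main obstacle is the Ax--Lindemann--Weierstrass step: the reduction through the finite map $f$ and the translation of the geodesic condition into the lattice condition on $W$ are bookkeeping, but the assertion that every algebraic curve in $\pi^{-1}(D)$ lies in a coset of a complex line is a genuine functional transcendence statement. For the surface $A=E_\Lambda\times E_{\bar{\Lambda}}$ it can be extracted either from Ax's differential-algebraic form of the Schanuel property for the exponential of an abelian variety, or from Pila's o-minimal point-counting method; this is exactly the content provided by \cite{brodaku77, Pil11}, and I would invoke it rather than reprove it.
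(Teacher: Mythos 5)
Your proof is correct, and its overall architecture matches the paper's: both handle the converse by observing that a translate of a lattice-spanned line projects to a translate of a one-dimensional complex subtorus, hence to an algebraic curve, and both handle the forward direction by black-boxing a functional transcendence theorem for the abelian surface $\C^2/(\Lambda\times\bar{\Lambda})$. The difference is which form of the black box you invoke and, consequently, how the arithmetic condition on $W$ emerges. The paper cites Pila's Theorem 9.2 in its ``geodesic independence'' formulation: non-density of the image of $C$ directly forces a geodesic relation between the two coordinate functions, and that relation already encodes the condition that $W\cap(\Lambda\times\bar{\Lambda})$ spans $W$ over $\R$, so no further work is needed. You instead invoke the Ax--Lindemann statement in coset form (maximal algebraic subvarieties of $\pi^{-1}(D)$ are translates of $\C$-subspaces) --- which for abelian varieties is really Ax's theorem rather than the literal statements of Brownawell--Kubota or Pila, though the content is equivalent --- and you must then supply, and correctly do supply, the extra identification of the admissible lines: the Zariski closure of $\pi(W)$ is a connected one-dimensional algebraic subgroup (this step tacitly uses $\pi(W)\subset D-\pi(\sigma)$, which you should state), hence an elliptic curve $B$, so $W=\mathrm{Lie}(B)$ is spanned over $\R$ by its lattice points. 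Your route buys a clean separation between the analytic input (the coset statement) and the arithmetic bookkeeping (the subgroup-closure argument), and it makes explicit the transfer of Zariski density through the finite map to $\Pr^1(\C)\times\Pr^1(\C)$, which the paper glosses over; the paper's route is shorter because the precise form of Pila's theorem performs that bookkeeping automatically.
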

\begin{proof}
    If $\hat{Z}$ is a line of the form $L + \sigma$ as in the statement then the image of $\hat{Z}$ in $\C/\Lambda \times \C/\bar{\Lambda}$ is a translate of a one-dimensional complex torus; hence, the image of $\hat{Z}$ in $\Pr^1(\C)\times \Pr^1(\C)$ is an algebraic curve. Conversely, a bialgebraic curve $\hat{Z}$ for $\wp_{\Lambda \times \bar{\Lambda}}$ has image in $\C/\Lambda \times \C/\bar{\Lambda}$ that is not Zariski dense. Therefore, \cite[Theorem 9.2]{Pil11} implies that the coordinate functions on $\hat{Z}$ cannot be geodesically independent, in the sense of \cite[Definition 1.5(2)]{Pil11}. It follows that $\hat{Z}$ is contained in, hence equal to, a line $L+\sigma$ as in the statement.
\end{proof}
\begin{remark}
Let us mention two alternative approaches to the proof of the first assertion in the above theorem. Let $\hat{Z}\subset \C^2$ be an algebraic curve.
\begin{enumerate}
    \item One can use \cite[Corollary 2]{brodaku77}, which implies that, for an open neighborhood $U\subset \hat{Z}$ of a smooth point biholomorphic to an open in $\C$ and where $\wp_{\Lambda \times \overline{\Lambda}}$ has no poles, if $\wp_{\Lambda \times \overline{\Lambda}}(U)$ is contained in an algebraic curve then $U$ is contained in a line.
    \item The assertion follows from the fact that $\hat{Z}$ is stabilized by a non-zero element of $\Lambda \times \overline{\Lambda}$. This fact can be shown using the argument in \cite[Step (v), p. 64]{zan12}, or adapting the discussion in Lemma \ref{lem:gammastabhatz} and \S \ref{sec:gammastabhatz} below.
\end{enumerate}
\end{remark}

\subsection{Relating real and complex bialgabraic sets}\label{realvscompbalg}

Fix a lattice $\Lambda \subset \C$.
\subsubsection{Preliminaries}\label{subsect:prelim}
Let $\cV=V(\R) \subset \R^2 $ be a bialgebraic set that is not a singleton. We can write $\cV$ as the zero locus of a single polynomial $R \in \R[X, Y]$ (taking the sum of the squares of a set of generators of the ideal defining $V$). By point (2) of Definition \ref{def-bialg}, the set $\cV$ must be the vanishing locus of an irreducible factor $P \in \R[X, Y]$ of $R$, which is also irreducible in $\C[X, Y]$ because $\cV$ is infinite  \cite[\href{https://stacks.math.columbia.edu/tag/0G69}{Tag 0G69}]{stacks-project}.

\subsubsection{The base change diagram}
Consider the maps
\begin{equation*}
\begin{aligned}[t]
  f\colon & \C^2  \rightarrow \C^2 \\ & (v, w) \mapsto (v+iw, v-iw)
\end{aligned}
\quad \hspace{5mm} \quad
\begin{aligned}[t]
  g \colon & \C^2  \rightarrow \C^2 \\
 & (a, b) \mapsto \left ( \frac{a+b}{2}, \frac{a-b}{2i} \right ).
\end{aligned}
\end{equation*}
As $\overline{ \wp_\Lambda(z) }= \wp_{\overline \Lambda}(\overline z)$ for every $z \in \C \smallsetminus \Lambda$, we have the following commutative diagram:

\begin{center}
\begin{equation} \label{commdiagram}
\begin{tikzcd}
\R^2 \smallsetminus \Lambda \arrow[r, "\mathcal{P}"] \arrow[hookrightarrow, d] &[0.5em] \R^2 \arrow[hookrightarrow, r] & \C^2 \arrow[d, "\iota \circ g^{-1}"]\\
\C^2 \arrow[r, "f"]  & \mathbb C^2 \arrow[r, "\wp_{\Lambda \times \bar{\Lambda}}"] & \mathbb \Pr^1(\C)^2. 
\end{tikzcd}
\end{equation}
\end{center}

The top right horizontal map (resp. the left vertical map) above is (resp. is induced by) the inclusion $\R^2 \subset \C^2$, and the map $\iota: \C^2 \rightarrow \Pr^1(\C)^2$ is given by the inclusion $\C \subset \Pr^1(\C)$ on each factor. 

\begin{lemma}\label{lem:realcompbalg}
    Let $\hat{Z}_P\subset \C^2$ be the complex curve with equation $P=0$. The curve $f(\hat{Z}_P)$ is bialgebraic for $\wp_{\Lambda \times \bar{\Lambda}}$.
\end{lemma}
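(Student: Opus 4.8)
The plan is to transport the non-Zariski-density of $\mathcal{P}(\cV \cap (\R^2 \smallsetminus \Lambda))$ through the commutative diagram \eqref{commdiagram}, and then to propagate it from the real locus $\cV$ to the entire complex curve $f(C_P)$ by analytic continuation.

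First I would record the geometric input from the preliminary discussion (\S\ref{subsect:prelim}): $P$ is irreducible in $\C[X,Y]$ and $\cV$ is infinite, so $C_P$ is an irreducible complex curve and, as $f$ is a $\C$-linear automorphism of $\C^2$, so is $f(C_P)$. Identifying $\R^2 \subset \C^2$, the locus $\cV$ sits inside $C_P$, and $\cV \cap (\R^2 \smallsetminus \Lambda)$ is an infinite subset of the irreducible curve $C_P$, hence Zariski dense in it; being an infinite planar real algebraic set with a countable subset removed, it is moreover uncountable.

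Next I would use the bialgebraicity of $\cV$ to fix a proper subvariety $W \subsetneq \mathbb{A}^2_\R$ with $\mathcal{P}(\cV \cap (\R^2 \smallsetminus \Lambda)) \subset W(\R)$, and let $W_\C$ denote its base change to $\C$. For $p \in \cV \cap (\R^2 \smallsetminus \Lambda)$, commutativity of \eqref{commdiagram} gives
\[
\wp_{\Lambda \times \bar{\Lambda}}(f(p)) = \iota\big(g^{-1}(\mathcal{P}(p))\big) \in \iota\big(g^{-1}(W_\C(\C))\big).
\]
Setting $Z \subset \Pr^1(\C)^2$ to be the Zariski closure of $\iota(g^{-1}(W_\C))$, the fact that $g^{-1}$ is a $\C$-linear automorphism of $\mathbb{A}^2_\C$ and that $W_\C$ is a proper (at most one-dimensional) subvariety shows $g^{-1}(W_\C)$ is proper, so $Z \subsetneq \Pr^1(\C)^2$. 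Thus $\wp_{\Lambda \times \bar{\Lambda}}$ sends $f(\cV \cap (\R^2 \smallsetminus \Lambda))$ into $Z$.

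The hard part will be upgrading this to $\wp_{\Lambda \times \bar{\Lambda}}(f(C_P)) \subset Z$. Here I would pick polynomials $Q_1, \dots, Q_r$ cutting out $Z$ and observe that the functions $Q_j \circ \wp_{\Lambda \times \bar{\Lambda}}$ are meromorphic on $f(C_P)$, holomorphic away from the discrete locus consisting of the poles of $\wp_{\Lambda \times \bar{\Lambda}}$ together with the singular points of $f(C_P)$; on that complement, a connected Riemann surface, they vanish along $f(\cV \cap (\R^2 \smallsetminus \Lambda))$. Since this set is uncountable, it has a condensation point lying off the countable exceptional locus, so by the identity theorem each $Q_j \circ \wp_{\Lambda \times \bar{\Lambda}}$ vanishes identically; passing to the normalization of $f(C_P)$ disposes cleanly of connectedness and of the singular points. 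It then follows that $\wp_{\Lambda \times \bar{\Lambda}}(f(C_P)) \subset Z$ is not Zariski dense, i.e.\ $f(C_P)$ is bialgebraic for $\wp_{\Lambda \times \bar{\Lambda}}$. The essential difficulty is precisely in guaranteeing that the real locus forces a genuine accumulation point inside the domain of holomorphy of $\wp_{\Lambda \times \bar{\Lambda}}$; this is where the infiniteness—indeed uncountability—of $\cV$, rather than mere non-emptiness, is indispensable.
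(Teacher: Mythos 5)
Your proof is correct, and its skeleton matches the paper's: both arguments transport the hypothesis through the commutative diagram \eqref{commdiagram}, replace the target curve by its Zariski closure in $\Pr^1(\C)^2$ so that the poles of $\wp_{\Lambda \times \bar{\Lambda}}$ cause no trouble, and then spread the resulting analytic condition from the real locus to the whole irreducible complex curve by an identity-principle argument. Where you differ is in how that continuation step is executed. The paper works directly on $C_P$ with the analytic set $A = C_P \cap q^{-1}(\hat{C}_Q)$, where $q = \wp_{\Lambda \times \bar{\Lambda}} \circ f$: since $\cV$ is infinite it contains a smooth point of $C_P$, hence an arc homeomorphic to an open interval; an analytic subset of $C_P$ containing such an arc contains an open subset of $C_P$, and irreducibility of $P$ makes $C_P$ irreducible as an analytic set (Grauert--Remmert \cite{grre84}), so the non-thin set $A$ equals $C_P$. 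You instead work on $f(C_P)$, use only cardinality --- an uncountable subset of a second-countable Riemann surface has a condensation point off the countable bad locus --- and apply the identity theorem on the normalization. What each buys: the paper's route needs the (true, but not purely set-theoretic) fact that an infinite real algebraic curve passes through a smooth point of its complexification along an arc, while yours replaces this by the elementary uncountability observation, avoiding both the real-analytic input and the Grauert--Remmert structure theory, at the cost of the condensation-point bookkeeping. One small gloss at the end: your identity-theorem step gives $\wp_{\Lambda \times \bar{\Lambda}}\bigl(f(C_P)\bigr) \subset Z$ only away from the poles and singular points; to finish, either note that $Z$ is closed, the complement of that locus is dense in $f(C_P)$, and $\wp_{\Lambda \times \bar{\Lambda}}$ is continuous into $\Pr^1(\C)^2$, or observe that the excluded points map into the infinity divisor together with finitely many points, which still leaves the image non--Zariski dense.
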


\begin{proof}
Let $\tilde{\cV}=\cV \cap (\R^2 \smallsetminus \Lambda)$. As $\cV$ is bialgebraic, there exists a non-constant polynomial $Q \in \R[X, Y]$ such that $\mathcal{P}(\tilde{\cV})$ is contained in the real algebraic curve with equation $Q=0$. Let $Z_Q^\circ$ be the complex plane curve with equation $Q=0$, and let $Z_Q\subset \Pr^1(\C)^2$ be the Zariski closure of $\iota \circ g^{-1}(Z_Q^\circ)$.

Let $q=\wp_{\Lambda \times \bar{\Lambda}}\circ f$. The commutativity of the diagram \eqref{commdiagram} implies that $A=\hat{Z}_P \cap q^{-1}(Z_Q)$ contains $\tilde{\cV}$. Note that, as $\cV$ is infinite, it contains a smooth point of $\hat{Z}_P$, hence $\tilde{\cV}$ contains an open subset homeomorphic to an interval. It follows that $A$ contains an open subset of $\hat{Z}_P$ biholomorphic to a disc. On the other hand, since $P$ is irreducible, the curve $\hat{Z}_P$ satisfies the equivalent conditions of \cite[Theorem, p. 168]{grre84}. As the analytic subset $A\subset \hat{Z}_P$ is not thin (in the sense of \cite[p. 132]{grre84}), we must have $A=\hat{Z}_P$, therefore $q(\hat{Z}_P)\subset Z_Q$.
\end{proof}

The above lemma tells us that a bialgebraic set for $\mathcal{P}_\Lambda$ that is not a singleton gives rise to a curve in $\C^2$ bialgebraic for $\wp_{\Lambda \times \bar{\Lambda}}$. In the other direction, we have the following result.

\begin{lemma} \label{check}
Let $\cV \subset \R^2$ be a non-empty subset of the form $\cV=V(\R)$ for some algebraic subvariety $V \subset \mathbb{A}^2_\R$, satisfying condition (2) in Definition \ref{def-bialg}. If $f(\cV)$ is contained in a bialgebraic curve for $\wp_{\Lambda \times \overline{\Lambda}}$ then $\cV$ is weakly bialgebraic for $\mathcal{P}_\Lambda$. 
\end{lemma}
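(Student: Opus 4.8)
The plan is to prove the converse direction of Lemma~\ref{lem:realcompbalg} by essentially reversing the base-change diagram~\eqref{commdiagram}. I am given that $\cV = V(\R)$ is non-empty, satisfies condition (2) of Definition~\ref{def-bialg}, and that $f(\cV)$ is contained in some curve $C$ that is bialgebraic for $\wp_{\Lambda\times\bar\Lambda}$. I must produce the real algebraic subvarieties $V$ and $W\subsetneq\mathbb{A}^2_\R$ witnessing condition (1), namely with $\cV=V(\R)$ (already given) and $\mathcal P_\Lambda(\cV\cap(\R^2\smallsetminus\Lambda))\subset W(\R)$. The target curve $W$ should be manufactured from the image $\wp_{\Lambda\times\bar\Lambda}(C)$, which by hypothesis is not Zariski dense in $\Pr^1(\C)^2$, hence is contained in a proper algebraic curve $\hat C_Q\subset\Pr^1(\C)^2$.

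First I would chase the diagram in the forward direction along $\cV$. Take a point $(x,y)\in\cV\cap(\R^2\smallsetminus\Lambda)$, so that $\mathcal P_\Lambda(x,y)=(\Re\wp_\Lambda(x+iy),\Im\wp_\Lambda(x+iy))\in\R^2$. The left vertical inclusion sends $(x,y)$ to itself in $\C^2$, and $f(x,y)=(x+iy,\,x-iy)$ lies in $f(\cV)\subset C$; applying $\wp_{\Lambda\times\bar\Lambda}$ and using $\overline{\wp_\Lambda(z)}=\wp_{\bar\Lambda}(\bar z)$ gives the pair $(\wp_\Lambda(x+iy),\,\overline{\wp_\Lambda(x+iy)})$, which lands in $\wp_{\Lambda\times\bar\Lambda}(C)\subset\hat C_Q$. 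Now apply $\iota\circ g^{-1}$ in reverse: by commutativity of~\eqref{commdiagram}, the point $\mathcal P_\Lambda(x,y)\in\R^2\hookrightarrow\C^2$ maps under $\iota\circ g^{-1}$ into $\hat C_Q$. Concretely $g^{-1}$ sends $(a,b)$ to $(a+ib,\,a-ib)$, so a real point $(u,v)\in\R^2$ maps to the conjugate pair $(u+iv,\,u-iv)$. Thus $\mathcal P_\Lambda(\cV\cap(\R^2\smallsetminus\Lambda))$ is contained in the preimage under $\iota\circ g^{-1}$ of $\hat C_Q$, intersected with $\R^2$.

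The remaining step is to check that this preimage is cut out by a real polynomial. Since $\hat C_Q$ is a proper closed subvariety of $\Pr^1(\C)^2$, its restriction to $\C^2$ is the vanishing locus of some polynomial in $\C[a,b]$; pulling back along the linear change of variables $g^{-1}$ yields a polynomial $H\in\C[U,V]$ whose vanishing locus contains the image above. I would then take the real subvariety $W\subset\mathbb A^2_\R$ defined by $\Re H$ and $\Im H$ (equivalently by $H\overline H$, a real polynomial); this is proper because $\hat C_Q\subsetneq\Pr^1(\C)^2$ forces $H$ to be a nonzero polynomial, so $W\subsetneq\mathbb A^2_\R$. By construction $\mathcal P_\Lambda(\cV\cap(\R^2\smallsetminus\Lambda))\subset W(\R)$, which is exactly condition~(1); combined with the given condition~(2) and non-emptiness, this shows $\cV$ is weakly bialgebraic for $\mathcal P_\Lambda$.

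The main obstacle I anticipate is bookkeeping at the boundary and at poles rather than any deep difficulty. One must ensure the diagram chase is valid precisely on $\cV\cap(\R^2\smallsetminus\Lambda)$, where $\wp_\Lambda$ is defined and finite, so that the points genuinely land in the affine chart $\C^2\subset\Pr^1(\C)^2$ where $\hat C_Q$ is cut out by the polynomial relation; points mapping to the line at infinity would need separate attention, but these form a finite set and do not affect the Zariski-closure statement. The only other point requiring care is confirming that $\hat C_Q$ being a proper subvariety genuinely yields a \emph{nonzero} defining polynomial after the change of coordinates, guaranteeing $W\subsetneq\mathbb A^2_\R$; this follows since $g^{-1}$ is a linear isomorphism and hence does not collapse a proper curve to all of $\C^2$.
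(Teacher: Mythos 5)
Your proposal is correct and follows essentially the same argument as the paper: a chase through the commutative diagram \eqref{commdiagram} shows that $\mathcal P_\Lambda(\cV\cap(\R^2\smallsetminus\Lambda))$ lands in the pullback under $\iota\circ g^{-1}$ of the proper curve containing $\wp_{\Lambda\times\bar\Lambda}(C)$, and the resulting complex polynomial $H$ is converted into a real one by passing to $H\overline H$, exactly as the paper does with its polynomial $R\overline R$. The extra care you take about poles and the line at infinity is harmless but unnecessary, since the points in question already lie in $\R^2\subset\C^2$ and hence in the affine chart.
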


\begin{proof}
By the commutativity of the diagram \eqref{commdiagram}, the image of $\mathcal{P}(\cV \cap (\R^2 \smallsetminus \Lambda))$ in $ \C^2$ is contained in an algebraic curve in $\C^2$, vanishing locus of a non-constant polynomial $R \in \C[X, Y]$. Therefore, the set $\mathcal{P}(\cV \cap (\R^2 \smallsetminus \Lambda))$ is contained in the real vanishing locus of $R \overline{R}  \in \R[X, Y]$; hence, the set $\cV$ is bialgebraic.
\end{proof}

\subsection{Linear description of weakly bialgebraic subsets for $\mathcal{P}_\Lambda$}\label{ssec:linbalg}
The notation of \S \ref{subsect:prelim} is in force.

\subsubsection{}\label{subsec:descW} By Theorem \ref{thm:pila} and Lemma \ref{lem:realcompbalg}, we can write $f(\hat{Z}_P)=L+\sigma$ for some $\sigma \in \C^2$ and some one-dimensional $\C$-vector subspace $L$ of $\C ^2$ of the form  $\langle w_1, w_2 \rangle_\R$, with $w_1$ and $w_2$ belonging to $\Lambda \times \overline{\Lambda}$. In particular, we have 
\begin{equation} \label{basechangeresult2}
f(\cV) \subset L+\sigma. 
\end{equation}

Let us write $w_1=(w_{11}, w_{12}) \in \Lambda \times \overline{\Lambda} $ and $w_2=(w_{21}, w_{22}) \in \Lambda \times \overline{\Lambda}$. Note that, as $\cV$ is not a singleton, the subspace $L$ cannot be equal to $\{0\} \times \C$. Thus, we may write
\begin{equation*}
    L=\{(x, y) \in \C^2 \mid y=rx\}
\end{equation*}
for some $r \in \C$, and we find
\begin{equation} \label{double}
w_{12}=r w_{11} \text{ and } w_{22}= r w_{21}. 
\end{equation}
We also note for future use that, since $L$ is a one-dimensional $\mathbb C$-vector space generated by $w_1$ and $w_2$ as a real vector space, there exists $t \in \mathbb C \smallsetminus \mathbb R$ such that $w_1= t w_2$, hence 
\begin{equation} \label{eqnforw11}
w_{11}=t w_{21} \text{ and } w_{12}= t w_{22}. 
\end{equation}

\subsubsection{} \label{sectionr}
Take two distinct elements $(x, y)$ and $(x', y')$ in $\cV$, and let $(x'', y'')=(x-x', y-y')$. By \eqref{basechangeresult2} and the definition of $f$ we have 
\begin{equation*} \label{eqnforr}
x''-iy''= r (x''+iy''), \text{ hence } |r|=1.
\end{equation*}

Write, thanks to Hilbert's Theorem 90, $r=\frac{\rho}{\bar{\rho}}$ with $\rho \in \C^\times$; we see that $\cV$ is contained in - and hence equal to - a translate of the line $\rho^{-1}\R$.

\begin{lemma} \label{translates}
Let $\mathcal{L} \subset \mathbb R^2$ be a line. If $\mathcal{L}$ is weakly bialgebraic for $\mathcal{P}_\Lambda$, then all its translates are weakly bialgebraic for $\mathcal{P}_\Lambda$ as well.
\end{lemma}

\begin{proof}
We have $f(\mathcal{L}) \subset L+\sigma$ for some $L$ and $\sigma$ as in \eqref{basechangeresult2}. Hence, for every $(c_1, c_2) \in \R^2$, the set $f(\mathcal{L}+(c_1, c_2))$ is contained in $L+\sigma+ (c_1+ ic_2, c_1-ic_2)$. The latter is bialgebraic for $\wp_{\Lambda \times \overline{\Lambda}}$ by Theorem \ref{thm:pila}; thus, the set $\mathcal{L}+(c_1, c_2)$ is weakly bialgebraic for $\mathcal{P}_\Lambda$ by Lemma \ref{check}. 
\end{proof}

\subsection{Proof of Theorem \ref{mainthmwp} when $\Lambda$ is not isogenous to $\overline{\Lambda}$}\label{subsec:lamdanotisogbar}

Let us prove Theorem \ref{mainthmwp}(1). Assume that there is a bialgebraic subset $\cV \subset \R^2$ that is not a singleton; we use the notation of \S \ref{subsect:prelim} and \S \ref{subsec:descW}. Let $\Lambda_1=\langle w_{11}, w_{21} \rangle_{\Z}$ and $\Lambda_2=\langle w_{12}, w_{22} \rangle_{\Z}$. We claim that $w_{11}$ and $w_{21}$ are linearly independent over $\R$. Indeed, by \eqref{double} the numbers $w_{11}$ and $w_{21}$ are non-zero. If $c \in \mathbb R$ is such that $w_{21}= c w_{11}$ then \eqref{eqnforw11} yields $w_{11}=c t w_{11}$, hence $c t=1$; this contradicts the fact that $t$ does not belong to $\R$. 

It follows that $\Lambda_1$ is a full sublattice of $\Lambda$; since $r \Lambda_1= \Lambda_2$, the lattice $\Lambda$ is isogenous to $\overline{\Lambda}$.

\subsection{Proof of Theorem \ref{mainthmwp} when $\Lambda$ is isogenous to $\overline{\Lambda}$ and not CM}\label{sec:proofnoncm}

Assume that $\Lambda$ is not CM and that $\Lambda$ is isogenous to $\overline{\Lambda}$. Choose $\gamma \in \C$ such that $\Isog(\Lambda, \overline{\Lambda})= \Z \cdot \gamma$. Note that $|\gamma|^2 \Lambda \subset \Lambda$, therefore $|\gamma|^2 \in \Z$. Let $\cV \subset \R^2$ be a bialgebraic subset that is not a singleton, and let $r, L$ be as in \S \ref{subsec:descW}.

\begin{lemma} \label{rvalue}
In the above situation, we have $\gamma r^{-1} \in \mathbb Q$.
\end{lemma}

\begin{proof}
Let $\Lambda'=\langle w_{12},  \gamma w_{11} \rangle_\mathbb Z \subset \overline{\Lambda}$. If $\gamma r^{-1} \not \in \Q$ then \eqref{double} implies that $\Lambda'=\langle rw_{11},  \gamma w_{11} \rangle_\mathbb Z$ is a free $\mathbb Z$-module of rank two, hence there exists an integer $N \geq 1$ such that $N \overline{\Lambda} \subset \Lambda'$. On the other hand, equation \eqref{eqnforw11} implies that $\Lambda' \subset t \overline{\Lambda}$, therefore we obtain $\frac{N}{t}\overline{\Lambda}\subset \overline{\Lambda}$. But $\overline{\Lambda}$ is not a CM lattice, hence we must have $Nt^{-1} \in \mathbb Z$, contradicting the fact that $t \not \in \mathbb R$.
\end{proof}

\subsubsection{Proof of Theorem \ref{mainthmwp}(2)}\label{subsec: proof(2)}

With the notation fixed at the beginning of \S \ref{sec:proofnoncm}, by Lemma \ref{rvalue} and the fact that $|r|=1$ (cf. \S \ref{sectionr}) we have $|\gamma| \in \Q$. This proves point $(a)$. Let us now suppose that $a=|\gamma| \in \Q$, and let us prove point $(b)$. As observed above, we also have $|\gamma|^2 \in \Z$, hence $|\gamma| \in \Z$. Furthermore, note that
\begin{equation}\label{rvsgamma}
    r=\pm \frac{\gamma}{|\gamma|}.
\end{equation}
If in addition $\gamma$ belongs to $\Q$ then \eqref{rvsgamma} implies that $r=\pm1$, and the discussion in \S \ref{sectionr} shows that $\cV$ is a translate of the $x$-axis or of the $y$-axis; by Lemma \ref{translates}, it remains to show that the two coordinate axes are bialgebraic. The image of the $x$-axis (resp. of the $y$-axis) via $f$ is contained in the line $L_1=\{ (x, y) \in \C ^2 \mid x=y\}$ (resp. $L_2= \{ (x, y) \in \C^2 \mid x=-y\}$). By Lemma \ref{check}, it suffices to show that $L_1$ and $L_2$ are bialgebraic for $\wp_{\Lambda \times \bar{\Lambda}}$.  Write $\Lambda=\langle \tau_1, \tau_2 \rangle_\mathbb Z$, and note that, for $i=1, 2$, we have $\gamma {\tau}_i \in \Lambda$ because $\gamma \in \mathbb Z$. Therefore, the elements $w_i=(\gamma \tau_i, \gamma \tau_i)$, for $i=1, 2$, belong to $\Lambda \times \overline{\Lambda}$; furthermore, we have $L_1=\langle w_1,  w_2 \rangle_\mathbb R$. Similarly, letting $w_i'=(-\gamma \tau_i, \gamma \tau_i) \in \Lambda \times \overline{\Lambda} $ for $i=1, 2$, we have $L_2=\langle w_1',  w_2' \rangle_\mathbb R$. Therefore, Theorem \ref{thm:pila} implies that $L_1$ and $L_2$ are bialgebraic for $\wp_{\Lambda \times \bar{\Lambda}}$.

If $\gamma \not \in \Q$, letting $\Lambda'=\sqrt{\gamma}\Lambda$ we have $a \Lambda'=\sqrt{\bar{\gamma}}\gamma\Lambda\subset\sqrt{\bar{\gamma}}\bar{\Lambda}=\overline{\Lambda'}$. Therefore, Theorem \ref{mainthmwp}(2)(b) holds true for $\Lambda'$. Furthermore, multiplication by $\sqrt{\gamma}$ on $\R^2 \simeq \C$ sends bialgebraic subsets for $\mathcal{P}_\Lambda$ to bialgebraic subsets for $\mathcal{P}_{\Lambda'}$. This yields a bijection between the sets of bialgebraic sets for $\mathcal{P}_{\Lambda}$ and $\mathcal{P}_{\Lambda'}$; furthermore, multiplication by $\sqrt{\gamma}$ sends the lines $\mathcal{L}_1, \mathcal{L}_2$ in the statement of Theorem \ref{mainthmwp}(2)(b) to the $x$ and $y$ axes respectively.
Hence, the statement for $\Lambda$ follows from the statement for $\Lambda'$.

\subsection{Proof of Theorem \ref{mainthmwp} for CM lattices}\label{subsec:proofCM}
Let us prove Theorem \ref{mainthmwp}(3). Let $\Lambda$ be a CM lattice and $K=\Isog(\Lambda,\Lambda) \otimes \mathbb Q\subset \C$; to prove our statement, we can replace $\Lambda$ by $\alpha \Lambda$ for any $\alpha \in \C^\times$. Without loss of generality, let us work with $\Lambda=\langle 1, \tau \rangle_\Z$ for some $\tau$ in the upper half-plane; we have $K=\Q(\tau)=\Isog(\Lambda, \bar{\Lambda})\otimes \Q$.

Let $\cV\subset \R^2$ be a bialgebraic set that is not a singleton. We know from \S \ref{sectionr} that $\cV$ is a translate of a line $\mathcal{L} \subset \R^2$ through the origin with the following property: there exists $r \in \C$ of absolute value one such that every $(x, y) \in \mathcal{L}$ satisfies $x-iy=r(x+iy)$. Furthermore, by \eqref{double} we have $r \in K$.

Any line $\mathcal{L}$ as above intersects $\Lambda$ at a non-zero point. Indeed, by Hilbert's Theorem 90 there exists $\rho \in K^\times$ such that $r=\frac{\rho}{\overline{\rho}}$, and $\mathcal{L}$ is the line $\rho^{-1} \R \subset \C \simeq \R^2$. There exists a non-zero integer $N$ such that $N \rho^{-1} \in \Lambda$, therefore $N \rho^{-1}$ belongs to $\Lambda \cap \mathcal{L}$.

Conversely, if $\mathcal{L} \subset \R^2$ is a line through the origin containing a non-zero point of $\Lambda$, then there exists $\rho \in K^\times$ such that $\mathcal{L}=\rho^{-1} \R$. Let $r=\frac{\rho}{\overline{\rho}}$ and $L=\{ (x, y) \in \C^2: y=rx \}$. Reversing the argument in the previous paragraph, we see that $f(\mathcal{L}) \subset L$. As $\Isog(\Lambda, \bar{\Lambda})\otimes \Q=K$, there is a non-zero integer $N$ such that $Nr \in \Isog(\Lambda, \overline{\Lambda})$. Letting $w_1=(N, Nr) \in \Lambda \times \overline{\Lambda}$ and $w_2=(N\tau, Nr\tau) \in \Lambda \times \overline{\Lambda}$, we have $L=\langle w_1,  w_2 \rangle_\R$. Theorem \ref{thm:pila} implies that $L$ is bialgebraic for $\wp_{\Lambda \times \overline{\Lambda}}$, therefore the line $\mathcal{L}$ is bialgebraic for $\mathcal{P}$ by Lemma \ref{check}, and the same holds true for its translates by Lemma \ref{translates}.

\begin{remark}
Let $\Lambda\subset \C$ be a lattice. The fact that bialgebraic lines for $\mathcal{P}_\Lambda$ must be translates of lines through the origin meeting $\Lambda$ at a non-zero point can be proved directly, via purely topological considerations. Indeed, if $\mathcal{L}$ is a line such that $\Lambda\cap \mathcal{L}=\{0\}$ then the image of $\mathcal{L}$ in $\C/\Lambda$ is dense, hence the same is true for the image of $\mathcal{L} \cap (\R^2 \smallsetminus \Lambda)$ via $\mathcal{P}$ in $\R^2\simeq \C \subset \Pr^1(\C)$. Therefore, the line $\mathcal{L}$ and its translates cannot be bialgebraic. Theorem \ref{mainthmwp} tells us that in the CM case - and in no other case - this topological constraint is the only obstruction to bialgebraicity of lines.
\end{remark}

\subsection{A group theoretic interpretation of bialgebraic lines and their images}\label{ssec:gpbialgell} Let $\Lambda$ be a lattice in $\C$. The map $\C \rightarrow \Pr^2(\C)$ sending $z \in \C \smallsetminus \Lambda$ to $[\wp_{\Lambda}(z): \wp'_{\Lambda}(z): 1]$ and $\Lambda$ to $O=[0: 1: 0]$ has image the complex points of a curve $E_\Lambda\subset \Pr^2_\C$; let
\begin{equation*}
p \colon \C\simeq \R^2 \rightarrow E_\Lambda(\C)=\mathrm{Res}_{\C/\R}E_\Lambda(\R)
\end{equation*}
be the resulting map, and let $Y=\Res_{\C/\R}E_\Lambda$. In this section, we will reinterpret the previous results in terms of the uniformization map $p$, and show that bialgebraic curves arise from elliptic curves defined over the reals and isogenous to $E_\Lambda$.
\begin{definition}\label{def:balgell} \leavevmode
    \begin{enumerate}
        \item A non-empty subset $\mathcal{V}\subsetneq \R^2$ is bialgebraic for $p$ if it satisfies property (2) of Definition \ref{def-bialg}, as well as the following variant of property (1): there exist algebraic subvarieties $V \subset \A^2_\R$ and $W \subsetneq Y$ such that $\cV=V(\mathbb R)$ and $p(\mathcal{V})\subset W(\R)$.
        \item An arc in a plane curve $\hat{C} \subset \A^2_\R$ is an open neighbourhood $\mathcal{I}\subset \hat{C}(\R)$ (in the Euclidean topology) of a smooth real point of $\hat{C}$, homeomorphic to an open interval. A subset $\mathcal{C}\subset E_\Lambda(\C)=Y(\R)$ is called a bialgebraic curve if there is an irreducible curve $C \subset Y$ such that $\mathcal{C}=C(\R)$, an algebraic curve $\hat{C} \subset \A^2_\R$ and an arc $\mathcal{I} \subset \hat{C}(\R)$ such that $p(\mathcal{I})\subset \mathcal{C}$.
    \end{enumerate}
\end{definition}

\begin{remark}\label{rem:defbalg}
Let us give some motivation behind Definition \ref{def:balgell}(2) - which will be used starting from Remark \ref{rem-bialgrealell} - and spell out the relation between bialgebraic curves in $Y(\R)$ and in $\R^2$.
\begin{enumerate}
    \item If $\mathcal{V}=V(\R)$ is a bialgebraic set that is not a singleton and $p(\cV)\subset W(\R)$, a priori $p(\cV)$ might not be contained in an irreducible component of $W$. On the other hand, there is an arc $\mathcal{I}\subset V(\R)$ such that $p(\mathcal{I})$ is contained in an irreducible component of $W$.
    \item If $\mathcal{C}=C(\R)$ is a bialgebraic curve, then we may take the curve $\hat{C}$ in Definition \ref{def:balgell}(2) to be irreducible. The arguments in the proof of Lemma \ref{lem:realcompbalg} and in \S \ref{ssec:linbalg} show that $\mathcal{I}$ is an arc in a translate $\mathcal{V}$ of a line as in Theorem \ref{mainthmwp}. By analytic continuation we deduce that $p(\mathcal{V})\subset \mathcal{C}$.
\end{enumerate}
\end{remark}

\begin{lemma}\label{lem-bialgpvsP}
    A set $\mathcal{V} \subset \R^2$ is bialgebraic for $\mathcal{P}_\Lambda$ if and only if it is bialgebraic for $p$.
\end{lemma}

\begin{proof}
The complement $E_\Lambda^\circ=E_\Lambda \smallsetminus \{O\}$ is an affine cubic plane curve with equation $y^2=x^3+ax+b$. Let $p_x^\circ: E^\circ_\Lambda \rightarrow \A^1_\C$ be the projection on the first coordinate; note that the composite of the map $\R^2 \smallsetminus \Lambda \rightarrow \mathrm{Res}_{\C/\R}E^\circ_\Lambda(\R)$ induced by $p$ and of the map $q=\mathrm{Res}_{\C/\R}(p_x^\circ)$ is the map $\mathcal{P}_\Lambda$, and that the morphism $q$ is finite and surjective.

Let $\mathcal{V}\subset \R^2$ be a subset satisfying condition (2) of Definition \ref{def-bialg}. Note that $\mathcal{V}$ is bialgebraic for $\mathcal{P}_\Lambda$ (resp. for $p$) if and only if $\mathcal{P}_\Lambda(\mathcal{V} \cap (\R^2\smallsetminus \Lambda))$ is not Zariski dense in $\A^2_\R$ (resp. $p(\mathcal{V})$ is not Zariski dense in $Y$). As $q$ is surjective, if $p(\mathcal{V})$ is Zariski dense in $Y$ then $\mathcal{P}_\Lambda(\mathcal{V} \cap (\R^2 \smallsetminus \Lambda))$ is Zariski dense in $\A^2_\R$. On the other hand, if $p(\mathcal{V})$ is not Zariski dense in $Y$ then it is contained in $W(\R)$ for a closed subset $W \subset Y$ of dimension at most one. As $q$ is finite, the set $q(W\cap \mathrm{Res}_{\C/\R}E_\Lambda^\circ)\subset \A^2_\R$ is closed and of dimension at most one, and it contains $\mathcal{P}_\Lambda(\mathcal{V} \cap (\R^2 \smallsetminus \Lambda))$.
\end{proof}
\subsubsection{The case of self-conjugate lattices}\label{subsec-bialgselfcong} If $\Lambda=\overline{\Lambda}$, then the curve $E_\Lambda^\circ=E_\Lambda \smallsetminus \{O\}$ has equation $y^2=x^3+ax+b$ with $a, b \in \R$. In this situation, the fact that the coordinate axes in $\R^2$ are bialgebraic for $p$ can be explained in group-theoretic terms. Indeed, let $E_{\Lambda, \R}$ be the real elliptic curve such that $E_{\Lambda, \R}^\circ$ has Weierstrass equation $y^2=x^3+ax+b$. The image in $E_\Lambda(\C)$ of the horizontal axis is an oval in $E_{\Lambda, \R}(\R)$ containing the point at infinity (for completeness we observe that, if $\Lambda=\langle \tau_1, \tau_2 \rangle_\Z$ is a rectangular lattice with $\tau_1 \in \R$ and $\tau_2 \in i\R$, then the line $\R \tau_1+\frac{1}{2}\tau_2$ projects to another oval in $E_{\Lambda, \R}(\R)$, cf. \cite[pp. 285, 286]{was08}).

On the other hand, the image of the vertical axis via $p$ is contained in the set of fixed points of the anti-holomorphic involution $P \mapsto -\bar{P}$. This is the set of real points of the quadratic twist $\tilde{E}_{\Lambda, \R}$, such that $\tilde{E}_{\Lambda, \R}^\circ$ has equation $-y^2=x^3+ax+b$, mapping to $E_\Lambda(\C)$ via $(x, y)\mapsto (x, iy)$. In other words, the Zariski closures of the images of the coordinate axes via $p$ are the sets of real points of the subgroups $E_{\Lambda, \R}\subset \mathrm{Res}_{\C/\R}(E_{\Lambda})$ and $\tilde{E}_{\Lambda, \R}\subset \mathrm{Res}_{\C/\R}(E_{\Lambda})$.

We will now argue that bialgebraicity of all the lines in Theorem \ref{mainthmwp} is deduced from the example we just described, taking images via automorphisms of the curve $E_\Lambda$ or via an isogeny from another curve.

\begin{lemma}\label{lem:bialg-isog}
    Let $\Lambda$ and $\Lambda'$ be two lattices in $\C$, and let $p: \C \rightarrow E_\Lambda(\C)$ and $p': \C \rightarrow E_{\Lambda'}(\C)$ be the uniformization maps. If $\Lambda' \subset \Lambda$ then a set $\cV \subset \R^2$ is bialgebraic for $p'$ if and only if it is bialgebraic for $p$.
\end{lemma}
\begin{proof}
    This follows, as in the proof of Lemma \ref{lem-bialgpvsP}, from the fact that the quotient map $\C/\Lambda' \rightarrow \C/\Lambda$ is attached to a finite surjective morphism $E_{\Lambda'}\rightarrow E_\Lambda$, hence the induced morphism $\mathrm{Res}_{\C/\R}E_{\Lambda'}\rightarrow \mathrm{Res}_{\C/\R}E_\Lambda$ is finite surjective.
\end{proof}

\subsubsection{The case of Theorem \ref{mainthmwp}(2)(b)}\label{subsubsec-bialgcvncm} Let $\Lambda$ be a lattice as in Theorem \ref{mainthmwp}(2)(b). As explained in \S \ref{subsec: proof(2)}, up to replacing $E_\Lambda$ by an isomorphic elliptic curve we may assume that $\Isog(\Lambda, \bar{\Lambda})=\Z\cdot \gamma$ with $\gamma \in \Q$. In this case, the intersection $\Lambda'=\Lambda \cap \overline{\Lambda}$ is a sublattice of $\Lambda$, and is self-conjugate. Bialgebraicity of the coordinate axes for $p'$ follows from \S \ref{subsec-bialgselfcong}, and Lemma \ref{lem:bialg-isog} implies that the coordinate axes are bialgebraic for $p$.

\subsubsection{The CM case}\label{subsubsec-bialgcvcm}

Let $\Lambda=\langle 1, \tau \rangle_\Z$ be a CM lattice such that $\Lambda=\bar{\Lambda}$ (every CM lattice is isogenous to one with this property). Let $\mathcal{L}\subset \R^2$ be the $x$-axis. As explained in \S \ref{subsec-bialgselfcong}, the set $\mathcal{L}$ is bialgebraic for $p$. Furthermore, the argument in \S \ref{subsec:proofCM} shows that every bialgebraic line through the origin is of the form $\rho^{-1} \mathcal{L}$ for some element $\rho \in K^\times$, where $K=\mathrm{Isog}(\Lambda, \Lambda)\otimes \Q$. Up to replacing $\rho$ by a non-zero rational multiple, we may assume that $ \rho^{-1}\Lambda \subset \Lambda$, hence bialgebraicity of $\rho^{-1}\mathcal{L}$ for $p$ follows from bialgebraicity of $\mathcal{L}$ and from Lemma \ref{lem:bialg-isog}.

\begin{remark}\label{rem-bialgrealell}
    Let $\Lambda \subset \C$ be a lattice and let $\mathcal{C}=C(\R)\subset Y(\R)$ be a bialgebraic curve. Using Remark \ref{rem:defbalg} we see that the curve $C$ is a translate of the Zariski closure of $p(\mathcal{V})$ for some bialgebraic line $\mathcal{V}\subset \R^2$  passing through the origin. It follows from the above discussion that there is a real elliptic curve $E'$ and an isogeny $\phi: E'\times_{\Spec(\R)} \Spec(\C) \rightarrow E_\Lambda$ such that the Zariski closure of $p(\cV)$ is the image of $E'$ via $\mathrm{Res}_{\C/\R}\phi$. As a consequence, if $C$ contains the identity point of $E_\Lambda$ then it is a real elliptic curve in $\mathrm{Res}_{\C/\R}E_\Lambda$.
\end{remark}

We collect some consequences of the above discussion in the following theorem, which will be extended to hyperbolic curves in 
$\S$\ref{sect:hypcurves}. We call a bialgebraic set for $p$ that is not a singleton a \emph{bialgebraic curve for} $p$. We denote by $\mathrm{Bialg}_Y$ the set of bialgebraic curves in $Y(\R)=\mathrm{Res}_{\C/\R}E_\Lambda(\R)$. Note that this set is stable by translation by any element of $E_\Lambda(\C)$.

\begin{theorem}\label{mainthmell}
Let $\Lambda\subset \C$ be a lattice with attached elliptic curve $E_\Lambda$, and let $p: \C \rightarrow E_\Lambda(\C)$ be the uniformization map. There exist bialgebraic curves for $p$ if and only if $E_\Lambda$ is isogenous to an elliptic curve with real $j$-invariant. Furthermore, the following properties are equivalent:
\begin{enumerate}
    \item the elliptic curve $E_\Lambda$ has complex multiplication;
    \item for every closed geodesic $\mathcal{G}\subset E_\Lambda(\C)$, the set of real points of the Zariski closure of $\mathcal{G}$ in $\mathrm{Res}_{\C/\R}E_\Lambda$ is a bialgebraic curve;
    \item the quotient of the set $\mathrm{Bialg}_Y$ by the equivalence relation given by translation is infinite.
\end{enumerate}
\end{theorem}
\begin{proof}
The discussion in \S \ref{subsubsec-bialgcvncm} and \S \ref{subsubsec-bialgcvcm} shows that, if there are bialgebraic curves for $p$, then $E_\Lambda$ is isogenous to an elliptic curve attached to a self-conjugate lattice, hence having real $j$-invariant. To prove the converse, by Lemma \ref{lem:bialg-isog} it suffices to show that, if $E_\Lambda$ has real $j$-invariant, then there are bialgebraic curves for $p$. If $j(E_\Lambda)$ is real then $E_\Lambda$ is isomorphic to a curve attached to a lattice $\Lambda_\tau=\Z+\tau\Z$ with $\tau$ of one of the following forms (cf. \cite[Chapter V, Proposition 2.1]{sil94}): $\tau=it$ for some real number $t \geq 1$, or $\tau=e^{i \theta}$ for some real number $\frac{\pi}{3}\leq \theta \leq \frac{\pi}{2}$, or $\tau=\frac{1}{2}+it$ for some real number $t \geq \frac{\sqrt{3}}{2}$. In the first and third case the lattice $\Lambda_\tau$ is self-conjugate, therefore there are bialgebraic curves for $p$. Such curves exist in the second case as well, as $E_{\Lambda_{\tau}}$ is isomorphic to the curve attached to a self-conjugate lattice $\Lambda_{\tau'}$, for $\tau'$ of the form $\frac{1}{2}+it$ with $\frac{1}{2}\leq t \leq \frac{\sqrt{3}}{2}$ \cite[p. 417]{sil94}.

Let us now prove the equivalence of properties (1), (2) and (3).
\begin{description}
    \item[$(1)\Rightarrow (2)$] a closed geodesic $\mathcal{G}$ is the image of a line $\mathcal{L}\subset \R^2$ as in Theorem \ref{mainthmwp}(3). Hence, we have $\mathcal{G}\subset C(\R)$ for a curve $C \subset \mathrm{Res}_{\C/\R}E_\Lambda$. As in Remark \ref{rem:defbalg} we see that $\mathcal{G}$ is contained in an irreducible component $C'$ of $C$, and the set $C'(\R)$ is a bialgebraic curve.
    \item[$(2) \Rightarrow (1)$] if $E_\Lambda$ does not have complex multiplication, take a closed geodesic $\mathcal{G} \subset E_\Lambda(\C)$ that is the image of a line through the origin different from the lines in Theorem \ref{mainthmwp}(2). The theorem and Lemma \ref{lem-bialgpvsP} imply that $\mathcal{G}$ is Zariski dense in $\mathrm{Res}_{\C/\R}E_{\Lambda}$.
    \item[$(1) \Rightarrow (3)$] there is an elliptic curve $E_{\Lambda'}$ attached to a self-conjugate lattice and an isogeny $q: E_{\Lambda'}\rightarrow E_{\Lambda}$. Let $C\subset \mathrm{Res}_{\C/\R}E_\Lambda$ be the image of $E_{\Lambda', \R}$ (defined in \S \ref{subsec-bialgselfcong}) via $\mathrm{Res}_{\C/\R}q$. It is a subgroup scheme of $\mathrm{Res}_{\C/\R}E_\Lambda$, and $\mathcal{C}=C(\R)$ is a bialgebraic curve. For every isogeny $\phi : E_{\Lambda}\rightarrow E_{\Lambda}$ the real points of $\Res_{\C/\R}\phi(C)$ form a bialgebraic curve, which is not a translate of $\mathcal{C}$ unless it is equal to it. The union of the curves $\Res_{\C/\R}\phi(C)$, as $\phi$ varies among all self-isogenies of $E_{\Lambda}$, is dense in $\mathrm{Res}_{\C/\R}E_\Lambda$; in particular, there are infinitely many such curves.
    \item[$(3)\Rightarrow (1)$] in view of Remark \ref{rem:defbalg}, this follows from Theorem \ref{mainthmwp}. \qedhere
\end{description}
\end{proof}

\subsection{A modular criterion for the existence of bialgebraic curves}\label{subs:modcrit}

\subsubsection{} Let $\Lambda\subset \C$ be a lattice. The existence of bialgebraic curves for $p: \C \rightarrow E_\Lambda(\C)$ (equivalently, for $\mathcal{P}_\Lambda$) requires two main inputs.
\begin{enumerate}
    \item One needs $E_\Lambda$ to be isogenous to $E_{\bar{\Lambda}}$ to guarantee that there are complex bialgebraic curves for $\wp_{\Lambda \times \overline{\Lambda}}$ that are not horizontal or vertical lines; cf. \S \ref{subsec:lamdanotisogbar}.
    \item The existence of bialgebraic curves for $\wp_{\Lambda \times \overline{\Lambda}}$ as above pulling back to curves in $\R^2$ requires the more restrictive condition that there is a non-zero element in $\Isog(\Lambda, \bar{\Lambda})$ with rational absolute value (this was established using \S \ref{sectionr} and Lemma \ref{rvalue} in the non-CM case; in the CM case, such an element always exists), and is in turn equivalent to the fact that $E_{\Lambda}$ is isogenous to an elliptic curve defined over the reals by Theorem \ref{mainthmell}.
\end{enumerate}

Note that $\Isog(\Lambda, \overline{\Lambda})$ is non-zero, respectively contains a non-zero element with rational absolute value, if and only if the same property is true for $\Isog(\alpha \Lambda, \overline{\alpha \Lambda})$, for any $\alpha \in \C^\times$. Let us therefore restrict to lattices of the form $\langle 1, \tau \rangle_\Z$, with $\tau$ belonging to the upper half-plane $\Hp$. In this section, we will describe the subsets of $\mathbb{H}$ corresponding to lattices satisfying conditions (1) and (2) above, in terms of the special geodesics studied in \cite[\S 3.3]{Tam23}.

\begin{proposition}\label{prop:locus}
    Let $\Lambda=\langle 1, \tau \rangle_{\Z}$ be a lattice with $\tau \in \mathbb H$.
    \begin{enumerate}
        \item The elliptic curve $E_{\Lambda}$ is isogenous to $E_{\overline{\Lambda}}$ if and only if $\tau$ belongs to a geodesic with endpoints in $\Pr^1(\Q)$, or with conjugate real quadratic endpoints.
        \item The elliptic curve $E_{\Lambda}$ is isogenous to an elliptic curve defined over the reals if and only if $\tau$ belongs to a geodesic with endpoints in $\mathbb P^1(\mathbb Q)$.
    \end{enumerate}
\end{proposition}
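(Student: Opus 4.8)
The plan is to prove both parts at once by computing the group $\Isog(\Lambda, \overline{\Lambda})$ explicitly for $\Lambda = \langle 1, \tau \rangle_{\Z}$ and reading off both criteria from a single integral quadratic, whose discriminant will control everything. Write $\tau = x+iy \in \mathbb{H}$, so that $\overline{\Lambda} = \langle 1, \overline{\tau} \rangle_{\Z}$. An element $\alpha$ lies in $\Isog(\Lambda, \overline{\Lambda})$ exactly when $\alpha \in \overline{\Lambda}$ and $\alpha \tau \in \overline{\Lambda}$. Writing $\alpha = p + q\overline{\tau}$ with $p, q \in \Z$ and imposing $\alpha\tau \in \overline{\Lambda}$, a short computation comparing imaginary parts shows that $\alpha \tau = r - p\overline{\tau}$ for some $r \in \Z$, subject to the single relation
\[
q\,|\tau|^2 + 2p\,\Re(\tau) = r.
\]
Thus nonzero elements of $\Isog(\Lambda, \overline{\Lambda})$ correspond to integer triples $(p,q,r) \neq 0$ satisfying this relation, i.e.\ to $\Q$-linear dependence of $1, \Re(\tau), |\tau|^2$. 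The computation I would stress next is the norm identity
\[
|\alpha|^2 = p^2 + qr,
\]
obtained by substituting the relation into $|\alpha|^2 = p^2 + 2pq\,\Re(\tau) + q^2|\tau|^2$; in particular $|\alpha|^2 \in \Z_{>0}$ for every nonzero $\alpha$.

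Next I would translate the relation geometrically. For a triple with $(p,q) \neq (0,0)$ it is the equation of a vertical line (if $q=0$) or of a circle centered on the real axis (if $q \neq 0$) through $\tau$, i.e.\ of a geodesic of $\mathbb{H}$; its endpoints on $\partial \mathbb{H}$ are the roots of the rational quadratic $qX^2 + 2pX - r$, of discriminant $4(p^2+qr) = 4|\alpha|^2 > 0$. This yields part (1): by definition $\Lambda$ is isogenous to $\overline{\Lambda}$ iff some nonzero triple satisfies the relation, and the associated geodesic then has two distinct real endpoints which are either both rational or a conjugate pair of real quadratic irrationals, according to whether the discriminant $4|\alpha|^2$ is a square. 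For the converse I would check that every geodesic with endpoints in $\Pr^1(\Q)$ or with conjugate real quadratic endpoints has, after clearing denominators, an equation of the above form, hence arises from a nonzero $\alpha$.

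Part (2) then falls out of the norm identity. Since $|\alpha|^2 = p^2 + qr \in \Z$, we have $|\alpha| \in \Q$ iff $|\alpha|^2$ is a perfect square iff the discriminant $4|\alpha|^2$ of the endpoint quadratic is a perfect square iff both endpoints of the associated geodesic lie in $\Pr^1(\Q)$. For the remaining converse I would exhibit the triple explicitly: a rational vertical line $\Re(\tau) = \rho$ gives (after clearing the denominator of $\rho$) a triple with $q=0$ and $|\alpha|^2 = p^2$; a semicircle with rational endpoints $\rho_1, \rho_2$ gives, after clearing denominators by $D$, a triple with $p^2 + qr = \big(D(\rho_1-\rho_2)/2\big)^2$, a rational square and hence a perfect square. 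Either way one produces a nonzero $\alpha \in \Isog(\Lambda, \overline{\Lambda})$ with $|\alpha| \in \Q$.

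The genuinely routine parts are the linear-algebra derivation of the defining relation, the norm identity, and the denominator-clearing in the converses. The one place requiring care — and what I expect to be the main (if modest) obstacle — is the bookkeeping at the boundary between the cases $q = 0$ and $q \neq 0$: ensuring that $\alpha \neq 0$ corresponds to a non-degenerate geodesic with two honest endpoints (which is guaranteed by $|\alpha|^2 = p^2 + qr > 0$, so the discriminant is positive and the locus is a semicircle rather than a point), and treating the cusp $\infty$ correctly as a rational endpoint for the vertical-line case. Organizing the two-way correspondence between triples $(p,q,r)$ and geodesics so that nothing is lost at this boundary is the only subtlety.
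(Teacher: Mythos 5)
Your proof is correct, and although the underlying algebra coincides with the paper's, the organization is genuinely different. Your triples $(p,q,r)$ are precisely the paper's integral trace-zero matrices: the paper writes $\gamma\cdot\tau=a\bar\tau+b$, $\gamma\cdot 1=c\bar\tau+d$ and uses that $a+d=0$, which in your notation is $A=\begin{pmatrix}-p & r\\ q & p\end{pmatrix}$; and your norm identity $|\alpha|^2=p^2+qr$ is exactly the identity $|\gamma|^2=|\det A|$ that the paper asserts without proof. The differences are two. First, for part (1) the paper does not argue directly at all: both directions are delegated to \cite[Remark 3.3.6, Proposition 3.3.7]{Tam23}, whereas you rederive the geodesic correspondence, and the rational versus conjugate-real-quadratic dichotomy of endpoints, from the positivity of the discriminant $4(p^2+qr)$ of the endpoint quadratic $qX^2+2pX-r$. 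Second, the paper proves part (2) by a separate computation of the circle's center $(a/c,0)$ and radius $\sqrt{|\det A|}/|c|$ (plus two explicit converse constructions that match yours), while in your argument both parts fall out of the single observation that this discriminant equals $4|\alpha|^2$, a positive integer which is a perfect square exactly when $|\alpha|\in\Q$. What your route buys is self-containedness (no citation needed for part (1)) and a unified treatment of the two parts; what it costs is only the boundary bookkeeping you already flagged — the degenerate case $q=0$ with $\infty$ as a rational endpoint, and an extra factor of $2$ when clearing denominators so that the linear coefficient of the geodesic's equation is even — both of which are routine and worth stating explicitly in a final write-up.
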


\begin{proof}\leavevmode
\begin{enumerate}
    \item If there is a non-zero element in $\Isog(\Lambda, \overline{\Lambda})$ then $\tau=\frac{a \bar{\tau}+b}{c \bar{\tau}+d}$ for some matrix $\mathsf{A}=\begin{pmatrix}
    a & b\\
    c & d
\end{pmatrix} \in M_2(\Z)$ of trace zero and negative determinant. Hence, $\tau$ belongs to a geodesic as in point (1) of the proposition - cf. \cite[Remark 3.3.6, Proposition 3.3.7]{Tam23}. Conversely, by the same remark and proposition, if $\tau$ belongs to such a geodesic then $\mathsf{A} \cdot \bar{\tau}=\tau$ for a matrix $\mathsf{A}$ as above (the action of $\mathsf{A}$ being via Möbius transformations), hence $c\bar{\tau}+d$ belongs to $\Isog(\Lambda, \overline{\Lambda})$.
    \item Let $\mathcal{R}\subset \Hp$ be the set of numbers of the form $it$ for some real number $t \geq 1$, or $e^{i \theta}$ for some real number $\frac{\pi}{3}\leq \theta \leq \frac{\pi}{2}$, or $\frac{1}{2}+it$ for some real number $t \geq \frac{\sqrt{3}}{2}$. By the description of real elliptic curves recalled in the proof of Theorem \ref{mainthmell}, the curve $E_{\Lambda}$ is isogenous to an elliptic curve defined over the reals if and only if there is $\tau'$ in $\mathcal{R}$ and a matrix $\mathsf{A} \in M_2(\Z)$ with positive determinant such that $\tau=\mathsf{A} \cdot \tau'$. As $\mathcal{R}$ is contained in a union of geodesics with rational endpoints, if $E_\Lambda$ is isogenous to an elliptic curve defined over $\R$ then $\tau$ belongs to a geodesic with rational endpoints. Conversely, if $\tau$ belongs to such a geodesic then it is of the form $\mathsf{A} \cdot \tau'$ for some $\tau'$ on the positive imaginary axis and some $\mathsf{A} \in M_2(\Z)$ with positive determinant, therefore $E_{\Lambda}$ is isogenous to an elliptic curve defined over the reals. \qedhere
\end{enumerate}
\end{proof}

\section{Subvarieties of real abelian varieties and torsion points}\label{sec-rrayn}

In this section, as the title suggests, we observe that the main result in \cite{ray83} has an analog for real abelian varieties. In the particular case of a Weil restriction $Y=\mathrm{Res}_{\C/\R}E$ of a complex elliptic curve, we deduce a relation between curves in $Y$ containing infinitely many torsion points and bialgebraic curves, mirroring what happens in the complex setting (cf. \cite[Example 4.6]{kuy18}).

\subsection{A real version of Raynaud's theorem}
\begin{theorem}[Raynaud\cite{ray83}]\label{thm-rayn}  Let $A$ be a complex abelian variety and let $Z \subset A$ be an irreducible subvariety. If $Z \cap A(\C)_{\tors}$ is Zariski dense in $Z$, then $Z$ is the translate of an abelian subvariety of $A$ by a point in $A(\C)_{\tors}$. 
\end{theorem}

Theorem \ref{thm-rayn} implies the following real analog.

\begin{proposition}
    Let $A$ be a real abelian variety, and let $W\subset A$ be an irreducible subvariety. If $A(\R)_{\tors}\cap W$ is Zariski dense in $W$, then $W$ is the translate of a real abelian subvariety of $A$ by a point in $A(\R)_{\tors}$.
\end{proposition}
\begin{proof}
Up to translating $W$ by a point of $A(\R)_{\tors}\cap W$, we may assume that it contains the identity. The base change $W_\C$ of $W$ to the complex numbers is an irreducible complex subvariety of $A_\C$ \cite[\href{https://stacks.math.columbia.edu/tag/0G69}{Tag 0G69}]{stacks-project} containing a Zariski dense subset of torsion points. By Theorem \ref{thm-rayn}, we can write $W_\C=A'+\tau$ for some torsion point $\tau$ and some abelian subvariety $A'$ of $A_\C$. As $W$ contains the identity element, we have $-\tau \in A'$, therefore $W_\C=A'$. The intersection $A(\R)\cap A'$ is Zariski dense in $A'$ by hypothesis, and it is fixed by the action of complex conjugation on $A_\C$. Therefore, this action sends $A'$ to itself, and yields a descent datum for $A'$ to the real numbers. Hence, there exists a subvariety $B$ of $A$ such that $A'=B_\C$ - so that $A(\R)\cap A'=B(\R)$ - and we have $W=B$ \cite[Proposition 4.1, Proposition 4.3]{mil24}. To conclude, let us show that $B$ is an abelian subvariety of $A$. Let $m: A \times A \rightarrow A$ be the multiplication map; the base change $m_\C: A_\C \times A_\C \rightarrow A_\C$ sends $A(\R)\times A(\R)$ to $A(\R)$, and $A'\times A'$ to $A'$. Therefore, we have $m(B(\R) \times B(\R))\subset B(\R)$. As $B(\R)\times B(\R)$ is Zariski dense in $B \times B$, we deduce that $m$ sends $B \times B$ to $B$. For the same reason the inversion map $\iota: A \rightarrow A$ restricts to a map $B \rightarrow B$. Hence, $B$ is an abelian subvariety of $A$.
\end{proof}

\subsection{Example: Weil restriction of complex elliptic curves}

Let $E$ be an elliptic curve over $\C$ and let $Y=\Res_{\C/\R} E$; the group structure on $E$ induces one on $Y$, which is therefore a real abelian surface.
\begin{theorem}\label{thm:mmrealell}
    Let $W\subset Y$ be an irreducible curve. The following assertions are equivalent.
    \begin{enumerate}
        \item The intersection $W\cap Y(\R)_{\tors}$ is infinite.
        \item There exists a real elliptic curve $E' \subset Y$ and a point $\alpha \in Y(\R)_{tors}$ such that $W=E'+\alpha$.
        \item There exists a bialgebraic curve $\mathcal{C}= C(\R)\subset Y(\R)$ passing through the origin and a point $\alpha \in Y(\R)_{\tors}$ such that $W=C+\alpha$. 
    \end{enumerate}
\end{theorem}

\begin{proof}
    The implication $(1) \Rightarrow (2)$ is a special case of the previous proposition. To prove that $(2) \Rightarrow (3)$, it suffices to show that $E'(\R)$ is a bialgebraic curve. Let $p: \R^2 \rightarrow Y(\R)$ be the uniformization map. The inclusion $E' \subset Y$ induces on tangent spaces at the origin a linear map $T_{O}(E')\rightarrow T_{O}(Y)=\R^2$, whose image $\mathcal{L}$ is a line such that $p(\mathcal{L})\subset E'(\R)$. Therefore, the set $E'(\R)$ is a bialgebraic curve. Finally, let us prove that $(3) \Rightarrow (1)$. Take a curve $W$ as in $(3)$; we claim that $W \cap Y(\R)_{\tors}$ is infinite. Indeed, we may assume that $W=C$ for a curve $C \subset Y$ through the origin such that $C(\R)$ is bialgebraic, and the claim follows from Remark \ref{rem-bialgrealell}.
\end{proof}

\begin{remark}
    For completeness, let us mention that the set of real elliptic curves $E' \subset Y$ can be described as follows. Given a line $\mathcal{L}\subset \R^2$ through the origin that is bialgebraic for $p$, the Zariski closure of $p(\mathcal{L})$ is an elliptic curve in $Y$ by Remark \ref{rem-bialgrealell}. The resulting function $\psi$ from the set of bialgebraic lines through the origin in $\R^2$ to the set of elliptic curves in $Y$ is surjective, by the argument in the proof of the implication $(2)\Rightarrow (3)$ in Theorem \ref{thm:mmrealell}. Furthermore, the function $\psi$ is injective. Indeed, if $\mathcal{L}_1$ and $\mathcal{L}_2$ are two distinct bialgebraic lines through the origin then $p(\mathcal{L}_1)$ and $p(\mathcal{L}_2)$ are distinct ovals with non-empty intersection in $Y(\R)$. Therefore, they cannot be contained in the set of real points of one elliptic curve $E' \subset Y$.

    As a consequence we see that in the situation of Theorem \ref{mainthmwp}(1) or (2)(a) there are no elliptic curves in $Y$, whereas in the situation of Theorem \ref{mainthmwp}(2)(b) the abelian surface $Y$ contains two elliptic curves (the two real forms of $E$). Finally, if $E$ has complex multiplication then the set of elliptic curves in $Y$ is infinite. More precisely, if $E$ has CM by an imaginary quadratic field $K$, the above discussion and the argument in \S \ref{subsec:proofCM} yield a bijection between the set of elliptic curves in $Y$ and the set of elements of $K^\times$ with absolute value one.
\end{remark}

\section{Real bialgebraic curves in hyperbolic algebraic Riemann surfaces}\label{sect:hypcurves}

The aim of this section is to describe real bialgebraic curves for the uniformization map of a hyperbolic algebraic (connected) Riemann surface.

\subsection{Setting and main statement}\label{sec-sett} Let $X$ be a hyperbolic algebraic curve, i.e., a complex algebraic curve obtained by removing $m\geq 0$ points from a smooth, projective, connected curve of genus $g$ with $2g-2+m>0$. The set of complex points $X(\C)$ is naturally equipped with the structure of a Riemann surface and, by the uniformization theorem for Riemann surfaces, the universal cover of $X(\C)$ is the upper half-plane $\Hp$. Let $\PGL(\R)^+\subset \PGL(\R)$ be the subgroup of matrices with positive determinant, acting on $\Hp$ via Möbius transformations. The Riemann surface $X(\C)$ is biholomorphic to $\Gamma \backslash \Hp$, for some lattice $\Gamma \subset \PGL(\R)^+$. We denote by $p$ the composite of the quotient map $\Hp \rightarrow \Gamma \backslash \Hp$ and of a fixed biholomorphism $\Gamma \backslash \Hp\xrightarrow{\sim} X(\C)$.

Let $Y=\Res_{\C/\R}X$. Via the natural identification $Y(\R)\simeq X(\C)$ we obtain a map $\Hp \rightarrow Y(\R)$, still denoted by $p$. The following definition of bialgebraic curves in $Y(\R)$ mirrors Definition \ref{def:balgell}.

\begin{definition}\label{def-bialgh} \leavevmode
\begin{enumerate}
\item A non-empty subset $\mathcal{V} \subset \Hp$ is called algebraic if there is a subvariety $V \subset \A^2_\R$ such that $\mathcal{V}=V(\R) \cap \Hp$, and $\mathcal{V}$ cannot be written as a union of two proper subsets of the form $V'(\R)\cap \Hp$, where $V' \subset \A^2_\R$ is a subvariety. An algebraic subset $\mathcal{V} \subsetneq \Hp$ that is not a singleton is called an algebraic curve, and an open neighbourhood $\mathcal{I}\subset \mathcal{V}$ (in the Euclidean topology) of a smooth real point of $V$, homeomorphic to an open interval, is called an arc in $\mathcal{V}$.
\item A subset $\mathcal{C} \subset Y(\R)$ is called a bialgebraic curve if there is an irreducible algebraic curve $C \subset Y$ such that $\mathcal{C}=C(\R)$, and there exists an arc $\mathcal{I}$ in an algebraic curve $\hat{\mathcal{C}}\subset \Hp$ such that $p(\mathcal{I})\subset \mathcal{C}$.
\end{enumerate}
\end{definition}

\begin{remark}\label{rem-alginH}\leavevmode
\begin{enumerate}
\item In the setting of (2), we may assume that $\hat{\mathcal{C}}=\hat{C}(\R) \cap \Hp$ for an irreducible curve $\hat{C}\subset \A^2_\R$, vanishing locus of an irreducible real polynomial in two variables.
\item As $X$ is hyperbolic, every biholomorphism $a : X(\C)\rightarrow X(\C)$ is algebraic \cite[Theorem 1.2(ii)]{jk20}. Furthermore, the map $a$ lifts to a biholomorphism of $\Hp$, which preserves algebraic subsets of $\Hp$ (as it extends to an automorphism of $\Pr^1_\C$). Hence, if $\mathcal{C}\subset Y(\R)$ is a bialgebraic curve then so is $a(\mathcal{C})$; furthermore, bialgebraic curves in $Y(\R)$ do not depend on the chosen biholomorphism $\Hp/\Gamma\xrightarrow{\sim} X(\C)$.
\end{enumerate}
\end{remark}

\subsubsection{Example: real algebraic curves}\label{ex-realbalg} Suppose that $X$ is the base change of a real algebraic curve $C$. The morphism $C \rightarrow Y=\mathrm{Res}_{\C/\R}X$ adjoint to the identity induces the natural inclusion $C(\R)\rightarrow C(\C)=Y(\R)$. If $\mathcal{C}=C(\R)$ is not empty, then it is a bialgebraic curve in $Y(\R)$. Indeed, letting $\D\subset \Pr^1(\C)$ be the complement of $\Pr^1(\R)$, the universal cover $p: \Hp \rightarrow X(\C)$ extends to a holomorphic cover $p': \D \rightarrow X(\C)$, equivariant for the action of complex conjugation on source and target. The automorphism group $\Gamma'$ of the equivariant covering $p'$ embeds in $\PGL(\R)$ via its action on $\D$, and it is a split extension of $\Gamma$ by the cyclic group of order 2, cf. \cite[Proposition 4.2 and its proof]{hui01}. The equivariance of $p'$ implies that, if $z \in \Hp$ is such that $\bar{z}=\gamma z$ for some $\gamma \in \Gamma'$, then $p'(z)$ belongs to $C(\R)$. Taking $\gamma \in \Gamma'$ of order 2 we obtain an algebraic curve $\hat{\mathcal{C}}=\{z \in \Hp \mid \bar{z}=\gamma z\}$ in $\Hp$ such that $p(\hat{\mathcal{C}})\subset \mathcal{C}$.

\vspace{10pt}

Our first aim is to prove the following result, which may be thought of as analogous to the equivalence between points (1) and (3) in Theorem \ref{mainthmell}.

\begin{theorem}\label{thm-main}
The lattice $\Gamma \subset \PGL(\R)^+$ is arithmetic if and only if there are infinitely many bialgebraic curves in $Y(\R)$.
\end{theorem}

\begin{remark}\leavevmode \label{rem:finnonar}
\begin{enumerate}
    \item The notion of arithmetic lattice in the Lie group $\PGL(\R)^+$ is defined in the statement of Margulis' theorem \cite[Theorem 1']{mar91}. This result is the key input in the proof of one of the implications in Theorem \ref{thm-main}. We note that Margulis' theorem has also been used to prove a different finiteness result for hyperbolic curves in \cite{moc98}.
    \item A finiteness result for maximal totally geodesic subvarieties of certain locally symmetric spaces with non-arithmetic fundamental group was proved in \cite[Theorem 1.2.1]{bau23} (as well as in \cite{bfms23}). This was one of the reasons, together with the finiteness statement \cite[Problem 5]{cw02} of more arithmetic flavor, why we were led to guess that Theorem \ref{thm-main} could hold true.
\end{enumerate}
\end{remark}

\subsection{Reduction to a statement in complex bialgebraic geometry} Let $X$ be a curve as in \S \ref{sec-sett}. We will first prove that, if there are infinitely many bialgebraic curves in $Y(\R)$, then $\Gamma$ is arithmetic.

\subsubsection{The complex conjugate curve} Let $\tau: \Spec(\C)\rightarrow \Spec(\C)$ be the morphism induced by complex conjugation, and let ${^\tau X}$ be the fiber product
\begin{center}
\begin{tikzcd}
{}^{\tau} X \arrow{r}{} \arrow[swap]{d}{} & X \arrow{d}{} \\%
\Spec (\C) \arrow{r}{\tau}& \Spec(\C).
\end{tikzcd}
\end{center}
For every $\C$-point $\Spec(\C)\rightarrow X$, precomposition with $\tau$ yields a $\C$-point of $^\tau X$, hence we obtain a map $\iota_X: X(\C) \rightarrow {^\tau X}(\C)$.

Concretely, if $X$ is proper, we can fix a closed embedding $X \hookrightarrow \Pr^n_\C$, realizing $X$ as the vanishing locus of finitely many homogeneous complex polynomials $P_1, \ldots, P_k$. The curve $^\tau X$ is the vanishing locus of $\overline{P}_1, \ldots, \overline{P}_k$, where $\overline{P}_i$ is the polynomial whose coefficients are the complex conjugates of those of $P_i$. The map $\iota_X$ sends $[x_0: \cdots : x_n]$ to $[\overline{x}_0: \cdots : \overline{x}_n]$. If $X$ is not proper then it is affine; fixing a closed embedding $X \hookrightarrow \A^n_\C$, we obtain a similar description of $^\tau X$, with arbitrary polynomials instead of homogeneous ones. The above formula shows that $\iota_X$ is continuous, hence a homeomorphism.

Let $\iota: \Hp \rightarrow \Hp$ be the map sending $z$ to $-\bar{z}$, and let $^\tau p=\iota_X \circ p \circ \iota: \Hp \rightarrow {^\tau X}(\C)$. By construction, the following diagram commutes:
\begin{center}
\begin{tikzcd}
\Hp \arrow{r}{\iota} \arrow[swap]{d}{p} & \Hp \arrow{d}{{^\tau p}} \\%
X (\C) \arrow{r}{\iota_X}& {}^{\tau} X(\C).
\end{tikzcd}
\end{center}

Let $\mathsf{C}=\begin{pmatrix}
1 & 0\\ 0 & -1
\end{pmatrix}$, and $^\tau \Gamma = \mathsf{C} \Gamma \mathsf{C}$.

\begin{lemma}
The map $^\tau p$ is holomorphic, and it is a Galois cover with Galois group $^\tau \Gamma$.
\end{lemma}

\begin{proof}
If $X$ is proper, embed it in $\Pr^n$; the map $p: \Hp \rightarrow X(\C)\subset \Pr^n(\C)$ is given by $n+1$ holomorphic functions $(p_0, \ldots, p_n)$ on $\Hp$, hence $^\tau p$ is given by the functions $^\tau p_i(z)=\overline{p_i(-\bar{z})}$, which are holomorphic. A similar argument works if $X$ is not proper, embedding it in an affine space.

The map $^\tau p$ is a cover because $p$ is a cover and the maps $\iota_X$ and $\iota$ are homeomorphisms. Finally, the fact that $^\tau p$ is Galois with group $^\tau \Gamma$ follows from a direct computation, using the fact that, for $\mathsf{M} \in \mathrm{GL}_{2}(\R)$ with positive determinant and $z \in \Hp$, we have $\iota \mathsf{M} \iota(z)=\mathsf{C M C} (z)$.
\end{proof}

\subsubsection{The base change diagram}\label{ss:bcdiag} The base change $Y_\C=Y \times_{\Spec(\R)}\Spec(\C)$ is isomorphic to $X \times_{\Spec(\C)} {^\tau X}$, and the composite of the maps
\begin{equation*}
X(\C)=Y(\R)\hookrightarrow Y(\C)\xrightarrow{\sim} X(\C)\times {^\tau X(\C)}
\end{equation*}
is given by $(\mathrm{Id}, \iota_X)$. Hence, the following diagram commutes:
\begin{center}
\begin{tikzcd}
\Hp \arrow{r}{ (\mathrm{Id}, \iota)} \arrow[swap]{d}{p} & \Hp \times \Hp \arrow{d}{p \times ^\tau p} \\ 
X (\C) =Y(\R) \arrow{r}{} & Y(\C)= X(\C)  \times {}^{\tau} X(\C).
\end{tikzcd}
\end{center}

We will make use of the map $f: \C^2 \rightarrow \C^2$ sending $(x, y)$ to $(x+iy, -x+iy)$.

\begin{definition}
We call a subset $\mathcal{Z}\subset X(\C)\times {^\tau X}(\C)$ a complex bialgebraic curve if it satisfies the following properties:
\begin{enumerate}
\item it is the set of complex points of an irreducible complex algebraic curve $Z \subset X \times {^\tau X}$;
\item there is an irreducible complex algebraic curve $\hat{Z} \subset \Pr^1_\C \times \Pr^1_\C$ and a non-empty open subset (in the Euclidean topology) $U \subset \hat{Z}(\C) \cap (\Hp \times \Hp)$ such that $(p \times {^\tau p})(U)\subset \mathcal{Z}$.
\end{enumerate}
\end{definition}

\begin{lemma}\label{lem-realcpxbialg}
If $\mathcal{C}=C(\R) \subset Y(\R)$ is a bialgebraic curve then $C(\C)\subset Y(\C)=X(\C) \times {^\tau X}(\C)$ is a complex bialgebraic curve.
\end{lemma}
\begin{proof}
Take a curve $\hat{C}\subset \A^2_\R$ as in Remark \ref{rem-alginH}(1), with an arc $\mathcal{I}\subset \hat{\mathcal{C}}=C(\R)\cap \Hp$ such that $p(\mathcal{I})\subset \mathcal{C}$. We have the following commutative diagram:

\begin{center}
\begin{tikzcd}
\R^2  \arrow[hookrightarrow, r]  &[0.5em] \C^2 \arrow[r, "f"] & \C^2 \\
\mathcal{I}\subset \hat{\mathcal C} \arrow[hookrightarrow, u] \arrow[hookrightarrow, r] & \Hp \arrow{r}{ (\mathrm{Id}, \iota)} \arrow[hookrightarrow, ul] \arrow[d, "p"] & \Hp \times \Hp \arrow[hookrightarrow, u] \arrow[d, "p \times {^\tau p}"] \\
& Y(\R) \arrow[r, ] & X(\C) \times {^\tau X}(\C) \\
& \mathcal C =C(\R) \arrow[hookrightarrow, u] \arrow[r] & C(\C)  \arrow[hookrightarrow, u].
\end{tikzcd}
\end{center}

Let $\hat{C}_\C\subset \A^2_\C$ be the base change of $\hat{C}$, and let $C_\C\subset Y_\C$ be the base change of $C$.
Note that $\hat{C}$ and $C$ are irreducible and contain a dense set of real points, hence $\hat{C}_\C$ and $C_\C$ are irreducible. 
We claim that $(p \times {^\tau p})(U)\subset C(\C)$ for some non-empty open subset $U\subset f(\hat{C}(\C))\cap (\Hp \times \Hp)$. Admitting this for the moment, let us view $f$ as an automorphism of $\A^2_\C$; taking $\hat{Z}\subset \Pr^1_\C \times \Pr^1_\C$ to be the Zariski closure of $f(\hat{C}_\C)$ we see that $C(\C)$ is bialgebraic.

Let us now prove our claim. The arc $\mathcal{I}$ is a neighbourhood of a smooth point $x$ of $\hat{C}$, and the image of $\mathcal{I}$ via $(\mathrm{Id}, \iota)$ is contained the intersection $f(\hat{C}(\C))\cap (p \times {^\tau p})^{-1}(C(\C))$. On the other hand, the intersection $f(\hat{C}(\C))\cap (\Hp \times \Hp)$ is an open neighbourhood of $f(x)$ in $f(\hat{C}(\C))$, hence it contains an open neighborhood $U$ of $f(x)$ biholomorphic to a disc. Furthermore, the set $(p \times {^\tau p})^{-1}(C(\C)) \cap U$ is a complex analytic closed, uncountable subset of $U$. Therefore, we have $U \subset (p \times {^\tau p})^{-1}(C(\C))$.
\end{proof}

\subsubsection{} Let $\BA_Y$ be the set of bialgebraic curves in $Y(\R)$, and let $\BA'_{X \times ^\tau X}$ be the set of bialgebraic curves in $X(\C)\times {^\tau X}(\C)$ that are neither horizontal nor vertical. Note that, if $\mathcal{C}=C(\R)$ belongs to $\BA_Y$, then $C(\C)$ belongs to $\BA'_{X \times {^\tau X}}$. Indeed, by the previous lemma $C(\C)$ is bialgebraic; furthermore, no coordinate can be constant on $C(\C)\subset X(\C)\times {^\tau X}(\C)$, as $p(\mathcal{I})$ is contained in $C(\R)$ and it is not a singleton. Finally, note that the map $\BA_Y \rightarrow \BA'_{X \times ^\tau X}$ sending $\mathcal{C}$ to $C(\C)$ is injective. Indeed, if $C_1(\R)=\mathcal{C}_1 \neq \mathcal{C}_2=C_2(\R),$ then $C_1$ and $C_2$ are distinct irreducible curves in $Y$, hence $C_1(\C)\neq C_2(\C)$.

Therefore, the fact that $\BA_Y$ is finite if $\Gamma$ is not arithmetic, which is one of the two implications in Theorem \ref{thm-main}, follows from the next result.

\begin{theorem}\label{thm-klin}
If $\Gamma \subset \PGL(\R)^+$ is not arithmetic, then the set $\BA'_{X \times {^\tau X}}$ is finite.
\end{theorem}

\begin{remark}
The proof of Theorem \ref{thm-klin} that we will give below follows a strategy that was explained to us by Bruno Klingler. It rests on a relation between bialgebraic curves in $X(\C)\times {^\tau X(\C)}$ that are neither horizontal nor vertical and elements of the commensurator of $\Gamma$ in $\PGL(\R)$, and on Margulis' theorem \cite[Theorem 1']{mar91}.
    
Let us also mention that stronger functional transcendence results are known for uniformization maps of products of a genus zero quotient of $\Hp$ by a Fuchsian group \cite{cfn20}.
\end{remark}

\subsection{Description of the curves in $\BA'_{X \times {^\tau X}}$}\label{sec-descbialg}
Let $X$ be a hyperbolic algebraic curve as in \S \ref{sec-sett} (we do not assume that $\Gamma$ is arithmetic). We will now describe bialgebraic curves in $X(\C)\times {^\tau X(\C)}$ that are neither horizontal nor vertical.

\subsubsection{} Let $\pi=p \times {^\tau p}: \Hp \times \Hp \rightarrow X \times {^\tau X}$. Let $i: Z \hookrightarrow X \times {^\tau X}$ be an irreducible curve, and let $\hat{Z}\subset \Pr^1_\C \times \Pr^1_\C$ be an irreducible curve such that there is a non-empty open subset (in the Euclidean topology) $U \subset \hat{Z}\cap (\Hp \times \Hp)$ satisfying $\pi(U)\subset Z$. Let $q: \tilde{Z}\rightarrow Z$ be the normalization of $Z$. Note that $\tilde{Z}$ is irreducible, hence $\tilde{Z}(\C)$ is connected in the Euclidean topology. The universal cover of $\tilde{Z}$ admits a non-constant holomorphic map to $\Hp \times \Hp$, hence it is biholomorphic to $\Hp$; we fix a covering map $\tilde{\pi}: \Hp \rightarrow \tilde{Z}$, and choose a (holomorphic) map $\ell: \Hp \rightarrow \Hp \times \Hp$ such that $i \circ q \circ \tilde{\pi}=\pi \circ \ell$. To sum up, we have the following commutative diagram:

\begin{center}
\begin{tikzcd}
& & U \arrow[hookrightarrow, d] \arrow[hookrightarrow, r] & \hat{Z} \arrow[hookrightarrow, d] \\
\Hp \arrow[d, "\tilde{\pi}"] \arrow[rr, "\ell"] & & \Hp \times \Hp \arrow[d, "\pi"] \arrow[hookrightarrow, r] & \Pr^1_\C \times \Pr^1_\C\\
\tilde{Z} \arrow[r, "q"] & Z \arrow[hookrightarrow, r, "i"] & X \times {^\tau X}. &
\end{tikzcd}
\end{center}

\begin{proposition}\label{mainprop}
The following assertions hold true.
\begin{enumerate}
\item There exists $h \in \PGL(\R)^+$ such that $\hat{Z}(\C)=\{(x, hx), x \in \Pr^1(\C)\}$, and $\mathsf{C}h$ belongs to the commensurator of $\Gamma$ in $\PGL(\R)$.
\item We have $\pi(\hat{Z}(\C)\cap (\Hp \times \Hp))=i(Z(\C))$.
\end{enumerate}
\end{proposition}

\subsection{Proof of Proposition \ref{mainprop}} The notation of \S \ref{sec-descbialg} is in force. The key point in the proof is to show that the stabilizer of $\hat{Z}$ in $\mathrm{Aut}(\Pr^1_\C)\times\mathrm{Aut}(\Pr^1_\C)$ is ``large'' - precisely, isomorphic to $\mathrm{PGL}_{2, \C}$ - exploiting the action of the fundamental group of $\tilde{Z}$ on $\ell(\Hp)$.

\subsubsection{Topology} Fix $z_0 \in \Hp$ and let $\tilde{z}_0=\tilde{\pi}(z_0)$; let $\tilde{\Gamma}=\pi_1(\tilde{Z}, \tilde{z}_0)$. For every $\gamma \in \tilde{\Gamma}$, let $\tilde{\gamma}: [0, 1] \rightarrow \Hp$ be the lift of $\gamma$ starting at $z_0$. There is a unique map $\varphi_{\gamma}: \Hp \rightarrow \Hp$ such that $\tilde{\pi}\circ \varphi_\gamma=\tilde{\pi}$ and $\varphi_\gamma(z_0)=\tilde{\gamma}(1)$. Sending $\gamma \in \tilde{\Gamma}$ to $\varphi_{\gamma}$ we obtain an action of $\tilde{\Gamma}$ on $\Hp$ (cf. the proof of \cite[Proposition 1.39, p. 71]{hat02}). Similarly, we have an action of $\pi_1(X \times {^\tau X}, i \circ q(\tilde{z}_0))$ on $\Hp \times \Hp$, defined using $\ell(z_0)$ as a base point of $\Hp \times \Hp$. For every $\gamma \in \tilde{\Gamma}$, the maps $\ell \circ \varphi_\gamma$ and $\varphi_{(i \circ q)_*(\gamma)}\circ \ell$ are lifts of $\pi \circ \ell$ with the same value at $z_0$, hence they coincide. Therefore, for every $z \in \Hp$ we have
\begin{equation*}
\ell(\gamma \cdot z)=(i \circ q)_*(\gamma)\cdot (\ell(z)).
\end{equation*}
In other words, letting $\tilde{\Gamma}$ act on $\Hp \times \Hp$ via the morphism $(i\circ q)_*: \tilde{\Gamma}\rightarrow \pi_1(X \times {^\tau X}, i \circ q(\tilde{z}_0))$ and the action of the target on $\Hp \times \Hp$, the map $\ell$ is $\tilde{\Gamma}$-equivariant. In particular, we have
\begin{equation}\label{eq-gammatl}
\tilde{\Gamma}\cdot \ell(\Hp)\subset \ell(\Hp).
\end{equation}

The above action yields an injective morphism $\pi_1(X \times {^\tau X}, i \circ q(\tilde{z}_0)) \rightarrow \mathrm{Aut}(\Hp) \times \mathrm{Aut}(\Hp)\simeq  \PGL(\R)^+\times \PGL(\R)^+$ with image $\Gamma \times {^\tau \Gamma}$; the action of $\PGL(\R)^+\times \PGL(\R)^+$ on $\Hp \times \Hp$ is the restriction of the natural action of $\PGL(\C)\times \PGL(\C)$ on $\Pr^1_\C \times \Pr^1_\C$. We denote the composite
\begin{equation*}
\tilde{\Gamma} \xrightarrow{(i \circ q)_*} \pi_1(X \times {^\tau X}, i \circ q(\tilde{z}_0)) \rightarrow \mathrm{Aut}(\Hp) \times \mathrm{Aut}(\Hp)\simeq  \PGL(\R)^+\times \PGL(\R)^+
\end{equation*}
by $j$.

\begin{lemma}\label{lem:gammastabhatz}
There exists $\gamma \in \Gamma \times {^\tau \Gamma}$ such that $\gamma \cdot \ell(\Hp)\subset \hat{Z} \cap (\Hp \times \Hp)$.
\end{lemma}
\begin{proof}
For every $\gamma \in \Gamma \times {^\tau \Gamma}$, the action of $\gamma^{-1}$ yields a bijection between $\hat{Z}\cap (\gamma \cdot \ell(\Hp))$ and $(\gamma^{-1} \cdot \hat{Z})\cap \ell(\Hp)$. We need to show that there is $\gamma \in \Gamma \times {^\tau \Gamma}$ such that $(\gamma^{-1} \cdot \hat{Z})\cap \ell(\Hp)=\ell(\Hp)$. Suppose by contradiction that $(\gamma^{-1} \cdot \hat{Z})\cap \ell(\Hp)\subsetneq \ell(\Hp)$ for every $\gamma \in \Gamma \times {^\tau \Gamma}$; then, for each $\gamma$ the set $A_\gamma=\ell^{-1}((\gamma^{-1} \cdot \hat{Z})\cap (\Hp \times \Hp))$ is properly contained in $\Hp$. The set $A_\gamma$ is the preimage of $\gamma^{-1} \cdot \hat{Z}$ via the composite of $\ell$ and the inclusion $\Hp \times \Hp \rightarrow \Pr^1_\C \times \Pr^1_\C$; therefore, it is a closed analytic subset of $\Hp$. As it is properly contained in $\Hp$, it must be countable. Since this is true for every $\gamma \in \Gamma \times {^\tau \Gamma}$, the union $\cup_{\gamma \in \Gamma \times {^\tau \Gamma}} A_\gamma$ is countable. But the image via $\ell$ of $\cup_{\gamma \in \Gamma \times {^\tau \Gamma}} A_\gamma$ is $\cup_{\gamma \in \Gamma \times {^\tau \Gamma}} (\gamma^{-1}\cdot \hat{Z} \cap \ell(\Hp))$, which is in bijection with $\cup_{\gamma \in \Gamma \times {^\tau \Gamma}} (\hat{Z} \cap \gamma \cdot \ell(\Hp))$. As $\pi(\ell(\Hp))=i(Z)$, the set $\cup_{\gamma \in \Gamma \times {^\tau \Gamma}} (\hat{Z} \cap \gamma \cdot \ell(\Hp))$ is equal to $\hat{Z}\cap \pi^{-1}(i(Z))$, which contains $U$ by assumption, hence it is uncountable.
\end{proof}

\subsubsection{}\label{sec:gammastabhatz} Fix $\gamma \in \Gamma \times {^\tau \Gamma}$ such that $\gamma \cdot \ell(\Hp)\subset \hat{Z} \cap (\Hp \times \Hp)$. By \eqref{eq-gammatl} we have $(\gamma j(\tilde{\Gamma}) \gamma^{-1})\cdot(\gamma \cdot \ell(\Hp))\subset \gamma \cdot \ell(\Hp)$; therefore, for every $\gamma' \in \gamma j(\tilde{\Gamma}) \gamma^{-1}$, we have $\gamma'\cdot (\gamma \cdot \ell(\Hp))\subset \hat{Z} \cap \gamma'\cdot \hat{Z}$. As a consequence, the two curves $\hat{Z}$ and $\gamma' \cdot \hat{Z}$ are equal.

We have proved that the action of $\gamma j(\tilde{\Gamma})\gamma^{-1}$ sends $\hat{Z}$ to itself; hence, the action of $\tilde{\Gamma}$ sends $\gamma^{-1}\cdot \hat{Z}$ to itself.

\subsubsection{Algebraic group theory}\label{ssec:algp} Let $G \subset \mathrm{Aut}(\Pr^1_\C)\times \mathrm{Aut}(\Pr^1_\C)\simeq \mathrm{PGL}_{2, \C}^2$ be the stabilizer of $\gamma^{-1}\cdot \hat{Z}$. It is a closed algebraic subgroup of $\mathrm{PGL}_{2, \C}^2$ \cite[Theorem 2.2.6]{bri17}; by the previous paragraph, the complex points of $G$ contain $j(\tilde{\Gamma})$. As the map $q$ is surjective and the curve $Z$ is neither horizontal nor vertical, the projection of $j(\tilde{\Gamma})$ on the first (resp. second) factor is a finite index subgroup of $\Gamma$ (resp. $^\tau \Gamma$). Indeed, let $q_1: \tilde{Z}\rightarrow X$ be the composite of the map $i \circ q: \tilde{Z}\rightarrow X \times {^\tau X}$ and of the first projection. There is a non-empty open $V \subset X$ such that $q_1: q_1^{-1}(V) \rightarrow V$ is finite étale (use \cite[\href{https://stacks.math.columbia.edu/tag/0BAI}{Tag 0BAI}]{stacks-project}, then remove the ramfication locus); as the fundamental group of $V$ surjects on the fundamental group of $X$, the image of $\tilde{\Gamma}$ in $\Gamma$ has finite index. The argument for the image of $\tilde{\Gamma}$ in $^\tau\Gamma$ is similar.

Therefore, the projection of $G \subset \mathrm{PGL}_{2, \C}^2$ on each factor is a closed subgroup scheme of $\mathrm{PGL}_{2, \C}$ \cite[Proposition 2.7.1]{bri17} whose complex points contain a lattice in $\mathrm{PGL}_{2}(\R)^+$. As lattices in $\PGL(\R)^+$ are Zariski dense in $\mathrm{PGL}_{2, \C}$ (by Borel's density theorem \cite[Corollary 4.5.6]{mor15}), the projection of $G \subset \mathrm{PGL}_{2, \C}^2$ on each factor is surjective.

\begin{lemma}\label{lem:descG}
The composite $\varphi$ of the inclusion $G\rightarrow \mathrm{PGL}_{2, \C}^2$ and of the projection on the first factor is an isomorphism. Furthermore, the map $G \rightarrow \mathrm{PGL}_{2, \C}^2$ is given by $(\varphi, h \varphi h^{-1})$ for some $h\in \PGL(\C)$.
\end{lemma}
\begin{proof}
Let $p_1$ and $p_2$ be the projections of $\mathrm{PGL}_{2, \C}^2$ on the two factors, and let $K=\mathrm{ker}(p_{1|G})$. Using that $p_{2|G}$ is surjective one checks that the subgroup $K(\C)\subset \{1\} \times \PGL(\C)$ is normal. It follows that $K=\{1\}$: indeed, as $\PGL(\C)$ is simple, it suffices to show that $K(\C)\neq \PGL(\C)$. Assume by contradiction that $K(\C)=\PGL(\C)$. The same argument with $p_2$ instead of $p_1$ shows that $G(\C) \cap (\PGL(\C)\times \{1\})$ is either $\{1\}$ or $\PGL(\C)\times \{1\}$. In the first (resp. second) case we deduce that $G(\C)=\{1\}\times \PGL(\C)$ (resp. $G(\C)=\PGL(\C)\times \PGL(\C)$), contradicting the fact that $G$ stabilizes a non-vertical curve.

We have shown that $p_1$ is injective when restricted to $G(\C)$; since we already know that it is surjective, we deduce that the restriction $\varphi$ of $p_1$ to $G$ is an isomorphism. The same argument with $p_2$ instead of $p_1$ shows that the restriction $\varphi'$ of $p_2$ to $G$ is an isomorphism. The composite $\varphi'\circ \varphi^{-1}$ is an automorphism of $\mathrm{PGL}_{2, \C}$, which is inner by \cite[Proposition 25.15]{kmrt98}. This completes the proof.
\end{proof}

\subsubsection{Description of $\hat{Z}$}\label{ssec:deschatz} By the above discussion the curve $\gamma^{-1}\hat{Z}\subset \Pr^1_\C \times \Pr^1_\C$ is stabilized by a group of the form $\{(g, hgh^{-1}), g \in \PGL(\C)\}$, for some $h \in \PGL(\C)$. Hence, the curve $\hat{Z}$ is stabilized by a group of this form as well. From now on, we fix $h \in \PGL(\C)$ such that the stabilizer of $\hat{Z}$ has complex points $\{(g, hgh^{-1}), g \in \PGL(\C)\}$.
\begin{lemma}\label{lem:hatz}
We have $\hat{Z}(\C)=\{(x, hx), x \in \Pr^1(\C)\}$. Furthermore, the element $h$ belongs to $ \PGL(\R)^+$.
\end{lemma}
\begin{proof}
The map $\Pr^1_\C\times \Pr^1_\C\rightarrow \Pr^1_\C\times \Pr^1_\C$ sending $(x, y)$ to $(x, h^{-1}y)$ is equivariant with respect to the $\PGL(\C)$-action on the source (resp. target) induced by the embedding  $\PGL(\C)\rightarrow \PGL(\C)^2$ sending $g$ to $(g, hgh^{-1})$ (resp. to $(g, g)$). Hence, to prove the first assertion we may assume that $h=1$, and show that $\hat{Z}$ is the image of the diagonal embedding $\Delta: \Pr^1_\C \rightarrow (\Pr^1_\C)^2$. The action of $\PGL(\C)$ on couples of distinct points of $\Pr^1(\C)$ is transitive. Therefore, if $\hat{Z}(\C)$ contains a point outside the diagonal, it contains the whole complement of the diagonal, contradicting the fact that $\hat{Z}$ is a curve. It follows that the curve $\hat{Z}$ is contained in, hence equal to, the image of $\Delta$.

Let us prove that $h$ belongs to $\PGL(\R)^+$. Recall that $\gamma j(\tilde{\Gamma}) \gamma^{-1}\subset \PGL(\R)^2$ stabilizes $\hat{Z}$. Hence, every element $(g, hgh^{-1}) \in \PGL(\C)^2$ belonging to $\gamma j(\tilde{\Gamma}) \gamma^{-1}$ is an element of the stabilizer of $\hat{Z}$ fixed by complex conjugation; therefore, for such an element conjugation by $\bar{h}^{-1}h$ fixes $g$. In other words, conjugation by the element $\bar{h}^{-1}h$ fixes a conjugate of the image of $\tilde{\Gamma}$ in the first factor of $\PGL(\C)^2$, hence it is equal to the identity (because the image of $\tilde{\Gamma}$ in $\PGL(\C)$ is Zariski dense). Therefore, the element $h$ belongs to $\PGL(\R)$; finally, as $\hat{Z}$ has non-empty intersection with $\Hp\times \Hp$, we must have $\det(h)>0$.
\end{proof}

\subsubsection{The commensurator of $\Gamma$}\label{ssec-comm} Consider the commensurator of $\Gamma$ in $\PGL(\R)$:
\begin{equation*}
\mathrm{Comm}_{\PGL(\R)}(\Gamma)=\{\mathsf{M} \in \PGL(\R) \mid \mathsf{M}^{-1} \Gamma \mathsf{M} \cap \Gamma \text{ has finite index in } \Gamma \text{ and } \mathsf{M}^{-1}\Gamma \mathsf{M} \};
\end{equation*}
it is a subgroup of $\PGL(\R)$ containing $\Gamma$.

For $h$ as in \S \ref{ssec:deschatz}, we will show that $\mathsf{C}h$ belongs to $\mathrm{Comm}_{\PGL(\R)}(\Gamma)$. Denoting the coordinates of $\gamma j\gamma^{-1}: \tilde{\Gamma} \rightarrow \PGL(\R)^2$ by $j_1$ and $j_2$, we have $j_2=h j_1 h^{-1}$; furthermore, we deduce from \S \ref{ssec:algp} that $j_1(\tilde{\Gamma})$ has finite index in $\Gamma$ and $h j_1(\tilde{\Gamma})h^{-1}$ has finite index in $^\tau \Gamma=\mathsf{C}\Gamma\mathsf{C}$. Therefore $j_1(\tilde{\Gamma})$ has finite index in $(\mathsf{C} h)^{-1}\Gamma\mathsf{C} h$; it follows that $\mathsf{C}h$ belongs to the commensurator of $\Gamma$ in $\PGL(\R)$. This completes the proof of Proposition \ref{mainprop}(1). The next lemma yields the second statement in the proposition.

\begin{lemma}\label{lem:descz}
We have $\pi(\hat{Z}\cap (\Hp \times \Hp))=i(Z)$.
\end{lemma}
\begin{proof}
    The intersection $\hat{Z} \cap \pi^{-1}(i(Z))$ is a closed analytic subset of $\hat{Z} \cap (\Hp \times \Hp) \simeq \Hp$ containing $U$; therefore, we have $\pi(\hat{Z}\cap(\Hp \times \Hp))\subset i(Z)$. To show the other inclusion, let $X'= h^{-1} {^\tau\Gamma} h \cap \Gamma \backslash\Hp$ and $X''= h\Gamma h^{-1} \cap {^\tau\Gamma} \backslash\Hp$. The action of $h$ on $\Hp$ induces a map $m_h: X' \rightarrow X''$, and using Lemma \ref{lem:hatz} one checks that $\pi(\hat{Z}\cap (\Hp \times \Hp))$ is the image in $X \times {^\tau X}$ of the graph of $m_h$. By the discussion in \S \ref{ssec-comm}, the group $h^{-1} {^\tau\Gamma} h \cap \Gamma$ has finite index in $\Gamma$, and $h\Gamma h^{-1} \cap {^\tau\Gamma}$ has finite index in $^\tau \Gamma$. Therefore the projections $X' \rightarrow X$ and $X'' \rightarrow {^\tau X}$ are finite covers and, by (a version of) Riemann's existence theorem \cite[Exposé XII, Théorème 5.1]{gro03}, the curves $X'$ and $X''$ are algebraic. In addition, by \cite[Theorem 1.2(ii)]{jk20} the morphism $m_h$ is algebraic, hence the graph of $m_h$ is a one-dimensional Zariski closed subset of $X' \times X''$.
It follows that the image in $X \times {^\tau X}$ of the the graph of $m_h$ is a Zariski closed one-dimensional subset. As $Z$ is an irreducible curve, we obtain that $\pi(\hat{Z}\cap(\Hp \times \Hp)) = i(Z)$.
\end{proof}

\subsection{Proof of Theorem \ref{thm-klin}}\label{sec:proofmainth}

In the following statement, given $h \in \PGL(\R)^+$, we denote by $\hat{Z}_h\subset \Pr^1_\C\times \Pr^1_\C$ the graph of the morphism given by the action of $h$.

\begin{lemma}\label{lem:z12}
Let $Z_1$ and $Z_2$ be two bialgebraic curves in $X \times {^\tau X}$. Let $h_1$ and $h_2$ be two elements of $\PGL(\R)^+$ with the following properties:
\begin{enumerate}
\item $\mathsf{C} h_1$ and $\mathsf{C} h_2$ belong to $\mathrm{Comm}_{\PGL(\R)}(\Gamma)$;
\item $\pi(\hat{Z}_{h_j}\cap (\Hp \times \Hp))=i(Z_j)$ for $j \in \{1, 2\}$.
\end{enumerate}
If $\mathsf{C} h_1$ and $\mathsf{C} h_2$ have the same image in $\Gamma \backslash \mathrm{Comm}_{\PGL(\R)}(\Gamma)$ then $i(Z_1)=i(Z_2)$.
\end{lemma}
\begin{proof}
Assume that $\mathsf{C}h_2=\gamma \mathsf{C} h_1$ for some $\gamma \in \Gamma$. Sending $(z, z')$ to $(z, \mathsf{C}\gamma \mathsf{C} z')$ yields a bijection
\begin{equation*}
\{(z, z') \in \Hp \times \Hp \mid z'=h_1 z \}\rightarrow \{(z, z') \in \Hp \times \Hp \mid z'=h_2 z\}.
\end{equation*}
Furthermore, as $\mathsf{C} \Gamma \mathsf{C}= {^\tau \Gamma}$, the points $(z, z')$ and $(z, \mathsf{C}\gamma \mathsf{C} z')$ are in the same $\Gamma \times {^\tau \Gamma}$-orbit for every $(z, z') \in \Hp \times \Hp$. Therefore, we have $\pi(\hat{Z}_{h_2}\cap (\Hp \times \Hp))=\pi(\hat{Z}_{h_1}\cap (\Hp \times \Hp))$.
\end{proof}

\subsubsection{Conclusion of the proof of Theorem \ref{thm-klin}} Assume that $\BA'_{X \times {^\tau X}}$ is infinite; we need to show that $\Gamma$ is arithmetic. For each $Z \in \BA'_{X \times {^\tau X}}$, choose - thanks to Proposition \ref{mainprop} - an element $h \in \PGL(\R)^+$ such that $\mathsf{C}h$ belongs to $\mathrm{Comm}_{\PGL(\R)}(\Gamma)$ and $\pi(\hat{Z}_{h}\cap (\Hp \times \Hp))=i(Z)$. Lemma \ref{lem:z12} implies that $\Gamma$ has infinite index in $\mathrm{Comm}_{\PGL(\R)}(\Gamma)$; hence, the index of $\Gamma$ in $\mathrm{Comm}_{\PGL(\R)^+}(\Gamma)$ is infinite as well. By Margulis' theorem \cite[Theorem 1']{mar91} the lattice $\Gamma$ is arithmetic.

\subsubsection{Bialgebraic curves and antiholomorphic involutions}\label{ssec: bialisgeo} The above discussion allows us to describe, in the setting of \S \ref{sec-sett}, bialgebraic curves in $Y(\R)$ in terms of (the real points of) étale covers of $X$ defined over the reals, extending Remark \ref{rem-bialgrealell} beyond projective curves of genus one.

Let $\mathcal{C}=C(\R)\subset Y(\R)$ be a bialgebraic curve, and let $\hat{\mathcal{C}}=\hat{C}(\R)\cap \Hp$ be an algebraic curve as in Remark \ref{rem-alginH}(1), with an arc $\mathcal{I}\subset \hat{\mathcal{C}}$ such that $p(\mathcal{I})\subset \mathcal{C}$. The proof of Lemma \ref{lem-realcpxbialg}, together with Proposition \ref{mainprop}, shows that the Zariski closure $\hat{Z}$ of $f(\hat{C}_\C)$ in $\Pr^1_\C \times \Pr^1_\C$ has complex points
\begin{equation*}
\hat{Z}(\C)=\{(x, hx), x \in \Pr^1(\C)\}
\end{equation*}
for some $h \in \PGL(\R)^+$ such that $\mathsf{C}h$ belongs to the commensurator of $\Gamma$ in $\PGL(\R)$. Therefore, choosing a matrix $\mathsf{A} \in \mathrm{GL}_2(\R)$ with image $\mathsf{C}h \in \PGL(\R)$, we obtain that $\hat{\mathcal{C}}=\{z \in \Hp \mid \mathsf{A}z=\bar{z}\}$.
This forces the trace of $\mathsf{A}$ to be 0, hence $\hat{\mathcal{C}}$ is a geodesic in $\Hp$ (cf. \cite[\S 3.3]{Tam23}). We remark that $p(\hat{\mathcal{C}})$ is contained in $\mathcal{C}$ (either because of the commutativity of the diagram in \S \ref{ss:bcdiag} and Lemma \ref{lem:descz}, or by analytic continuation).

Note that $\mathsf{A}$ is equal to its inverse in $\PGL(\R)$. Letting $X'=\mathsf{A} \Gamma \mathsf{A} \cap \Gamma \backslash \Hp$, the map $z \mapsto \mathsf{A} \bar{z}$ induces an anti-holomorphic involution $\iota_{\mathsf{A}}$ of the Riemann surface $X'$, which is algebraic: this is shown as in \cite[\S 3]{hui00} (the existence of the morphism $f$ in \cite[\S 3, p. 151]{hui00} being a consequence of \cite[Theorem 1.2(ii)]{jk20}). Therefore, the involution $\iota_{\mathsf{A}}$ yields an effective descent datum for the complex algebraic curve $X'$ to a real algebraic curve $C'$ \cite[Corollary 7.3]{mil24}, and the image of $\hat{\mathcal{C}}$ via the quotient map $\Hp \rightarrow X'$ is contained in $C'(\R)$. Therefore, letting $Y'=\mathrm{Res}_{\C/\R}X'$, the set $C'(\R)$ is a real bialgebraic curve in $Y'(\R)$ (see also Example \ref{ex-realbalg}). As the projection $q: Y'\rightarrow Y$ is finite, the image $q(C')$ is an irreducible algebraic curve, hence it coincides with $C$ (as both curves contain $p(\hat{\mathcal{C}})$).

To sum up, we have achieved the following description of bialgebraic curves in $Y(\R)$.

\begin{proposition}\label{prop-bialfromrealcover}
Let $\mathcal{C}\subset Y(\R)$ be a bialgebraic curve. There is a smooth, geometrically connected real algebraic curve $C'$ with non-empty set of real points and a finite étale morphism $X'=C'\times_{\Spec(\R)}\Spec(\C) \rightarrow X$ with restriction of scalars $q: Y' \rightarrow Y$ such that $\mathcal{C}$ is the set of real points of $q(C')$.
\end{proposition}

\subsection{Bialgebraic curves in the arithmetic case}\label{ssec:bialgarit}

\subsubsection{} Fix a hyperbolic algebraic curve $X=\Gamma \backslash\Hp$, and suppose now that $\Gamma \subset \PGL(\R)^+$ is arithmetic; we will give two descriptions of the elements of $\BA_Y$; in particular we will show that this set is infinite, completing the proof of Theorem \ref{thm-main}. For this purpose, we may replace $\Gamma$ by a conjugate subgroup, or by a subgroup of finite index: indeed, suppose that $\Gamma'\subset \Gamma$ has finite index and let $q: Y' \rightarrow Y$ be the projection map, where $Y'(\R)=\Gamma'\backslash \Hp$. If $\mathcal{C}'=C'(\R)\subset Y'(\R)$ is a bialgebraic curve and $C=q(C')$ then $\mathcal{C}=C(\R)$ is a bialgebraic curve, and the resulting map $\BA_{Y'}\rightarrow \BA_Y$ has finite fibers.

\subsubsection{The non cocompact case} If $\Gamma\backslash \Hp$ is not compact then, up to conjugation, the group $\Gamma$ is commensurable with $\mathrm{SL}_2(\Z)/\{\pm \mathrm{Id}\}$ \cite[Proposition 6.1.5]{mor15}. Hence we may assume that $\Gamma$ is a finite index subgroup of $\mathrm{SL}_2(\Z)/\{\pm \mathrm{Id}\}$, and we have a map $q: Y \rightarrow \A^2_\R$ inducing the quotient map $\Gamma \backslash \Hp \rightarrow \mathrm{SL}_2(\Z)\backslash \Hp$ on real points. If $\mathcal{S} \subset \Hp$ is a special geodesic, as defined in \cite[Definition 3.3.5]{Tam23}, then $p(\mathcal{S}) \subset Y(\R)$ is contained in the preimage $C$ via $q$ of one of the curves $\mathcal Z_N$ defined in \cite[\S 3.5.1]{Tam23}. The image of (an arc of) $\mathcal{S}$ via $p$ is contained in an irreducible component of $C$, whose real points are therefore a bialgebraic subset of $Y(\R)$. Varying $\mathcal{S}$, we see that there are infinitely many bialgebraic curves in $Y(\R)$ (in fact, as special geodesics are dense in $\Hp$, so are bialgebraic curves in $Y(\R)$).

\subsubsection{The cocompact case: bialgebraic curves from tori}\label{ssec:atr} Let us now suppose that $\Gamma$ is cocompact. By \cite[Proposition 6.2.6]{mor15} (see also \cite[\S 2]{moc98}), up to conjugating $\Gamma$ and passing to a subgroup of finite index, we may suppose that there exist a totally real field $F$, a non-split quaternion agebra $B$ over $F$ ramified at all but one infinite place of $F$, and an $\mathcal{O}_F$-order $O \subset B$, with the following property. Let $G=\mathrm{Res}_{F/\Q}B^\times$, let $O^{\times, 1}\subset O^\times$ be the subgroup of norm one elements, and let $\tau: B^\times \rightarrow \PGL(\R)$ be the composite of the inclusion $G(\Q)\subset G(\R)$ and of the projection onto the non-compact factor of $G(\R)/(F \otimes \R)^\times$. The group $\Gamma$ is a subgroup of finite index of $\tau(O^{\times, 1})$.

Let $L=F(\sqrt{D})$ be a non-CM quadratic extension of $F$, with $D \in \mathcal{O}_F$, that can be embedded in $B$. Fixing such an embedding, let $\mathsf{A}=\tau(\sqrt{D})$. Note that, for every archimedean place $\sigma: F \rightarrow \R$ where $B$ is ramified, the tensor product $L \otimes_{F, \sigma} \R$ is isomorphic to $\C$. Hence, denoting by $\sigma_0$ the unique archimedean place of $F$ where $B$ is unramified, we have $L \otimes_{F, \sigma_0} \R\simeq \R \times \R$. In particular $\sigma_0(D)$ is positive, hence (any lift to $\mathrm{GL}_2(\R)$ of) $\mathsf{A}$ has trace 0 and negative determinant. Let $\mathcal{S}_\mathsf{A}=\{z \in \Hp \mid \mathsf{A}z=\bar{z}\}$.
\begin{lemma}\label{lem-geodbalg}
The image of $\mathcal{S}_\mathsf{A}$ in $Y(\R)$ is contained in the set of real points of an irreducible algebraic curve.
\end{lemma}
\begin{proof}
Let $\hat{Z} \subset \Pr^1_\C \times \Pr^1_\C$ be the graph of the action of $h=\mathsf{CA}$. As $\mathsf{A}$ belongs to the commensurator of $\Gamma$, the proof of Lemma \ref{lem:descz} shows that $\pi(\hat{Z}\cap (\Hp \times \Hp))$ is an irreducible algebraic curve $Z\subset X \times {^\tau X}$. We will show that $Z$ is the base change of a real algebraic curve $C \subset Y$. This implies that, for every $z \in \mathcal{S}_\mathsf{A}$, the point $p(z)$ belongs to $Y(\R)\cap Z(\C)=C(\R)$. To show the existence of $C$, let $\bar{Z}$ be the image of $Z$ via the action of complex conjugation on $X \times {^\tau X}=Y \times_{\Spec(\R)}\Spec(\C)$. The image of $p(\mathcal{S}_{\mathsf{A}})$ in $(X \times {^\tau X})(\C)$, being fixed by complex conjugation, is contained in $Z(\C) \cap \bar{Z}(\C)$. Therefore we have $Z=\bar{Z}$, hence the curve $Z$ is the base change of a real algebraic curve $C \subset Y$  \cite[Proposition 4.3]{mil24}.
\end{proof}

\subsubsection{} Using the above lemma, let us show that $\BA_Y$ is infinite. Take two different fields $L_1=F(\sqrt{D_1})$ and $L_2=F(\sqrt{D_2})$ as in \S \ref{ssec:atr} embedded in $B$. For $i \in \{1, 2\}$, let $\mathsf{A}_i=\tau(\sqrt{D}_i)$, and let $\hat{Z}_i \subset \Pr^1_\C\times \Pr^1_\C$ by the graph of the action of $h_i=\mathsf{C A_i}$. We claim that the images $Z_1=\pi(\hat{Z}_1\cap (\Hp \times \Hp))$ and $Z_2=\pi(\hat{Z}_2 \cap (\Hp \times \Hp))$ are distinct; this implies that the real curves $C_1$ and $C_2$ in $Y$ with complex base change $Z_1$ and $Z_2$ respectively are distinct, hence $C_1(\R)\neq C_2(\R)$ (as these sets are infinite) and $\BA_Y$ is infinite.

To prove our claim, take $(x_2, y_2) \in \hat{Z}_2(\C)\cap (\Hp \times \Hp)$. If $\pi(x_2, y_2)=\pi(x_1, y_1)$ for some $(x_1, y_1)\in \hat{Z}_1(\C)$, then there exists $(\gamma, \gamma') \in \Gamma \times {^\tau \Gamma}$ such that $(x_1, y_1)$ belongs to the graph $\hat{Z}_{\tilde{h}_2}$ of the action of $\tilde{h}_{2}=\gamma'h_2\gamma$. Let us prove that $\hat{Z}_{\tilde{h}_2}\neq \hat{Z}_{h_1}$. Writing $\gamma'=\mathsf{C} \gamma''\mathsf{C}$ with $\gamma'' \in \Gamma$, if the two graphs were equal we would deduce that $\mathsf{A}_1$ and $\gamma''\mathsf{A}_2 \gamma \in \PGL(\R)$ are equal. But both elements belong to $B^\times/F^\times \subset \PGL(\R)$ and, as $L_1 \neq L_2$ and $\gamma$ and $\gamma''$ have norm one, the norms of any lifts of $\mathsf{A}_1$ and $\gamma''\mathsf{A}_2 \gamma$ to $B^\times$ do not differ by a square.

We have showed that $\hat{Z}_{\tilde{h}_2}\neq \hat{Z}_{h_1}$, hence the intersection of the two curves is finite. Varying $(\gamma, \gamma')$ we deduce that $Z_1 \cap Z_2$ is countable, therefore $Z_1 \neq Z_2$.

\begin{remark}
In fact, every bialgebraic curve in $Y(\R)$ arises via the construction in \S \ref{ssec:atr}, for some embedding of a non-CM quadratic extension $L/F$ in $B$. Indeed, as $\Gamma$ has finite index in $\tau(O^{\times, 1})$, the commensurators of $\Gamma$ and $\tau(O^{\times, 1})$ in $\PGL(\R)$ coincide, and it follows from \cite[Corollaire 1.5, p. 106]{vig80} that they are equal to $\tau(B^\times)$. Take $\mathsf{A} \in \PGL(\R)$ that is the image of a matrix with trace zero and negative determinant, and of the form $\mathsf{A}=\tau(\delta)$ for some $\delta \in B$. First of all $\delta$ cannot belong to $F$ (otherwise, having trace zero, it would be equal to zero), hence the $F$-algebra generated by $\delta$ is a quadratic extension $L$ of $F$ contained in $B$. Furthermore, the image of $\delta$ in $B \otimes_{F, \sigma_0}\R$ has negative determinant, hence $L\otimes_{F, \sigma_0}\R \simeq \R \times \R$.

On another note, let us remark that the geodesics in \S \ref{ssec:atr} are used in Greenberg's (resp. Guitart--Masdeu--Xarles's) generalization of Darmon's (resp. Darmon--Vonk's) construction of Stark--Heegner points (resp. $p$-adic singular moduli), cf. \cite[p. 560]{gre09} (resp. \cite[\S 4]{gmx21}).
\end{remark}

\subsubsection{Closed geodesics and bialgebraic curves} To conclude, for $\Gamma$ arithmetic and cocompact let us give a description of bialgebraic curves in $Y(\R)$ involving the Riemannian geometry of $Y(\R)$.

\begin{corollary}\label{cor:cocogeod}
Assume that $\Gamma$ is an arithmetic cocompact lattice. Every closed geodesic in $Y(\R)$ is contained in a bialgebraic curve; conversely, every bialgebraic curve in $Y(\R)$ contains a closed geodesic.
\end{corollary}
\begin{proof}
Closed geodesics in $Y(\R)=X(\C)$ are precisely images of geodesics $\mathcal{S}$ in $\Hp$ whose endpoints are the two fixed points of the action of an element $\gamma \in \Gamma \smallsetminus \{\mathrm{Id}\}$ (which is automatically an hyperbolic element of $\PGL(\R)$, as $\Gamma$ acts freely on $\Hp$ and $\Gamma \backslash \Hp$ is compact), cf. \cite[\S 6]{bb73}.

Take a closed geodesic $\mathcal{S}$ in $Y(\R)$ attached to an element $\gamma$ as above. Up to replacing $\gamma$ by a positive power, we may assume that it is the image of an element of $B^\times \smallsetminus F^\times$ (with the notation of \S \ref{ssec:atr}), which generates a quadratic non-CM extension $L/F$. Every element of $L^\times \smallsetminus F^\times$ has the same fixed points on $\Pr^1(\R)$ as $\gamma$; choosing an element whose trace in $F$ is zero we see, thanks to Lemma \ref{lem-geodbalg}, that $p(\mathcal{S})$ is contained in an irreducible real algebraic curve.

Conversely, let $\mathcal{C}=C(\R)\subset Y(\R)$ be a bialgebraic curve; we need to show that it contains a closed geodesic in $\Gamma \backslash \Hp$. We may replace $\Gamma$ by a subgroup of finite index and assume that we are in the situation of \S \ref{ssec:atr}. By \S \ref{ssec: bialisgeo} and the fact that $\mathrm{Comm}_{\PGL(\R)}(\Gamma)=\tau(B^\times)$, the set $\mathcal{C}$ contains the image of a geodesic $\mathcal{S}$ in $\Hp$ with equation $\mathsf{A}z=\bar{z}$, with $\mathsf{A}$ of trace zero and negative determinant, and of the form $\mathsf{A}=\tau(b)$ for some $b\in B^\times$. Let us show that the endpoints of $\mathcal{S}$ are the fixed points of some element of $\Gamma \smallsetminus \{\mathrm{Id}\}$. Let $L$ be the quadratic extension of $F$ generated by $b$, and let $O \subset B$ be an $\mathcal{O}_F$-order such that $\Gamma$ has finite index in $\tau(O^{\times, 1})$. The intersection $R=O \cap L$ is an $\mathcal{O}_F$-order in $L$, whose group of units has rank $1+\mathrm{rk}(\mathcal{O}_F^\times)$ \cite[Theorem I.12.12]{neu99}. Take a unit $\varepsilon \in R^\times$ of norm one in $F$, and generating an infinite subgroup with trivial intersection with $\mathcal{O}_F^\times$. Letting $m$ be the index of $\Gamma$ in $\tau(O^{\times, 1})$, we have $\tau(\varepsilon^m) \in \Gamma$, and the geodesic joining the fixed points of $\varepsilon^m$ is equal to $\mathcal{S}$.
\end{proof}

\begin{remark}
The above corollary implies that $\BA_Y$ consists exactly of Zariski closures of closed geodesics if and only if $\Gamma$ is arithmetic, mirroring the equivalence between points (1) and (2) in Theorem \ref{mainthmell}.
\end{remark}

\subsubsection{Proof of Theorem \ref{thm:altogether}}\label{ssec:altogether} The first point for $X$ the complement of a point in $\A^1_\C$ follows from \cite[Theorem 2.2.4]{Tam23}. If $X$ is compact of genus one, it follows from the first assertion in Theorem \ref{mainthmell} (and the fact that, for an étale cover $q: X' \rightarrow X$ as in the statement, the genus of $X'$ is one by the Riemann--Hurwitz formula). If $X$ is hyperbolic, then by Example \ref{ex-realbalg} the set of real points of $X'$ is a bialgebraic curve, whose image via (the Weil restriction of) the cover $X' \rightarrow X$ is contained in a bialgebraic curve in $Y(\R)$. Conversely, all bialgebraic curves in $Y(\R)$ come from covers as in the statement of the theorem by Proposition \ref{prop-bialfromrealcover}.

Let us now show the second point. If $X=\A^1_\C \smallsetminus \{0\}$ then the argument in the proof of \cite[Theorem 2.2.4]{Tam23} implies that bialgebraic curves in $Y(\R)$ are either lines through the origin or circles with center at the origin \footnote{See also \cite[Example 2.2.1]{Tam23}, but note that in the displayed formula $e^{-2\pi t}$ should be replaced by $e^{-4\pi t}$.}. In particular, there are two equivalence classes of bialgebraic curves in $Y(\R)$. If $X$ is compact of genus one, the statement follows from the equivalence between points (1) and (3) in Theorem \ref{mainthmell}. Finally, take $X$ hyperbolic; note that the automorphism group of $X$ is finite (an automorphism of $X$ extends to an automorphism of its compactification $\widehat{X}$, and there are finitely many automorphisms of $\widehat{X}$ fixing $\widehat{X} \smallsetminus X$). Therefore, the statement follows from Theorem \ref{thm-main}.

\bibliographystyle{amsalpha}
\bibliography{bialg}

\end{document}